\newtheorem{theorem}{Theorem}[section]
\newtheorem{theorem*}{Theorem}
\newtheorem{claim}{Claim}[theorem]
\newtheorem{lemma}[theorem]{Lemma}
\newtheorem{corollary}[theorem]{Corollary}
\newtheorem{remark}[theorem]{Remark}
\theoremstyle{definition}
\DeclareMathOperator{\si}{si}
\DeclareMathOperator{\cl}{cl}
\DeclareMathOperator{\PG}{PG}
\DeclareMathOperator{\AG}{AG}
\DeclareMathOperator{\GF}{GF}
\newcommand{\del}{\setminus}
\newcommand{\con}{/}
\begin{document}

\sloppy

\title{Dense $\PG(n-1,2)$-Free Binary Matroids}

\author{Rutger Campbell}
\email{}
\address{Department of Combinatorics and Optimization,
University of Waterloo, Canada}

\subjclass{05B25, 05B35, 51E21, 51E22}
\keywords{matroid, projective geometry, blocking set, packing problem}
\date{\today}

\begin{abstract}
For each integer $n \geq 2$, we prove that,
if $M$ is a simple rank-$r$ $\PG(n-1,2)$-free binary matroid with
$|M|>\left(1-\frac{3}{2^n}\right)2^r$, then there is a triangle-free corank-$(n-2)$ flat of $M$.
\end{abstract}

\maketitle

\section{Introduction}
We call a matroid {\em $N$-free}
if it has no restriction isomorphic to $N$.
A {\em triangle} is a matroid isomorphic to $\PG(1,2) \cong U_{2,3}$.
Note that if a binary matroid $M$ contains a triangle-free flat of corank-$(n-2)$,
then $M$ is $\PG(n-1,2)$-free.
We show that for sufficiently dense matroids, the converse holds.

\begin{theorem}\label{struct}
For integers $r$ and $n$ with $r \geq n \geq 2$, 			
if $M$ is a simple rank-$r$ $\PG(n-1,2)$-free binary matroid with
$|M|> \left(1-\frac{3}{2^{n}}\right)2^{r}$, 
then there is a triangle-free corank-$(n-2)$ flat $K$ of $M$.
Furthermore, $|K|> \frac{1}{4}2^{r(K)}$.
\end{theorem}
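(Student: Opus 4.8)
The plan is to pass to the complement $B:=\PG(r-1,2)\setminus M$, viewed inside $\PG(r-1,2)$. A restriction of $M$ isomorphic to $\PG(n-1,2)$ is exactly a rank-$n$ flat of $\PG(r-1,2)$ disjoint from $B$, so the hypothesis says precisely that $B$ meets every rank-$n$ flat, while $|M|>\left(1-\tfrac{3}{2^n}\right)2^r$ gives $|B|\le 3\cdot 2^{r-n}-2$. It then suffices to produce a rank-$(r-n+2)$ flat $F$ of $\PG(r-1,2)$ on which $B$ meets every line (rank-$2$ flat): put $K:=M\cap F$. Then no line of $F$ is a triangle of $M$, so $K$ is triangle-free, and
\[
|K|=|F|-|B\cap F|\ \ge\ (2^{r-n+2}-1)-(3\cdot 2^{r-n}-2)\ =\ 2^{r-n}+1 .
\]
A triangle-free subset of a rank-$(r-n+1)$ flat has at most $2^{r-n}$ points, so $|K|>2^{r-n}$ forces $K$ to span $F$; hence $K$ is a flat of $M$ of rank $r-n+2$, i.e.\ of corank $n-2$, and $|K|>2^{r-n}=\tfrac14 2^{r(K)}$. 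Thus the flat and the ``furthermore'' both follow once such an $F$ is found.

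If $n=2$ then $\PG(n-1,2)$ is a triangle, so the hypothesis already says $M$ is triangle-free and we may take $F=\PG(r-1,2)$. Assume $n\ge 3$. The plan is to build $F$ by cutting down one flat at a time, using as an engine the following \emph{concentration lemma}, to be proved separately: if $B'\subseteq\PG(m-1,2)$ meets every rank-$k$ flat, $|B'|<3\cdot 2^{m-k}$, and $k\ge 3$, then some hyperplane $H$ of $\PG(m-1,2)$ satisfies $|B'\setminus H|<2^{m-k}$. Granting this, such an $H$ has the property that $B'\cap H$ meets every rank-$(k-1)$ flat of $H$: if a rank-$(k-1)$ flat $\bar W\subseteq H$ were missed by $B'\cap H$, then each of the $2^{m-k}$ rank-$k$ flats $W$ with $W\cap H=\bar W$ would have to meet $B'\setminus H$, and since each point of $B'\setminus H$ lies in exactly one such $W$ this needs $|B'\setminus H|\ge 2^{m-k}$, a contradiction. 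Now apply the lemma with $(m,k)=(r,n)$; inside the resulting hyperplane with $(m,k)=(r-1,n-1)$; and so on. Since $m-k$ stays equal to $r-n$ and the surviving set is always contained in $B$, the hypotheses persist, and after $n-2$ steps (the last with $k=3$) we reach a rank-$(r-n+2)$ flat $F$ on which the surviving part of $B$---hence $B\cap F$ itself---meets every line, as required.

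The main obstacle is the concentration lemma. The obvious second-moment estimate (comparing $\sum_H|B'\cap H|$ with $\sum_H|B'\cap H|^2$) only yields $|B'|\gtrsim 2^{m-k+1}$, which is consistent with the hypothesis and so gives no contradiction; since flats (the Bose--Burton-extremal blocking sets) are highly concentrated, the real content is to rule out a ``spread-out'' blocking set of size below $3\cdot 2^{m-k}$---an analogue of the binary elliptic quadric, which for $k=2$ genuinely obstructs concentration and is exactly why $k\ge 3$ is needed. I expect to prove it by a separate induction on $m$ that uses the blocking structure more carefully: for a hyperplane $H$ minimising $|B'\setminus H|$, one studies the rank-$(k-1)$ flats of $H$ that $B'\cap H$ fails to meet and how the points of $B'$ outside $H$ must be apportioned among their liftings, feeding this back into the Bose--Burton bound $|B'|\ge 2^{m-k+1}-1$. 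Calibrating this recursion is precisely where the constant $3$ is consumed, and is the step I expect to be most delicate.
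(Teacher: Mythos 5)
Your reduction to the ``concentration lemma'' is sound, and the surrounding bookkeeping checks out: with $B:=\PG(r-1,2)\setminus M$ one has $|B|\le 3\cdot 2^{r-n}-2$, the lifting count showing that $|B'\setminus H|<2^{m-k}$ forces $B'\cap H$ to block every rank-$(k-1)$ flat of $H$ is correct, the iteration preserves $m-k$ and the size bound, and the spanning argument for $K$ correctly delivers both the corank and the ``furthermore'' clause. The problem is that the concentration lemma is not a side issue you can defer: it \emph{is} the theorem. Via complementation it is equivalent to the paper's inductive step (Theorem~\ref{geostructind}) --- indeed, by Lemma~\ref{Hsize} a hyperplane $H$ with $|B'\setminus H|<2^{m-k}$ is exactly one for which the complementary set restricted to $H$ is $\PG(k-2,2)$-free, and conversely the Bose--Burton bound shows any such hyperplane satisfies $|B'\setminus H|<2^{m-k}$ given $|B'|<3\cdot 2^{m-k}$. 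So your proposal proves nothing beyond what the paper's framework already sets up, and leaves the entire content unestablished.

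Concretely, the gap is the base case $k=3$: a blocking set of rank-$3$ flats of size under $3\cdot 2^{m-3}$ must be concentrated on a hyperplane, equivalently the paper's Theorem~\ref{fanofree} that a fano-free representation of density above $\tfrac58$ has a triangle-free hyperplane section. You correctly observe that the second-moment estimate is too weak here, but your proposed replacement (``induction on $m$\,'', apportioning the points of $B'\setminus H$ among liftings of unblocked flats) contains no mechanism for excluding the spread-out configurations; the paper needs a Fourier-analytic triangle count ($\epsilon$-uniformity plus the strengthened Green counting lemma, Theorem~\ref{tri_counting}) played off against an upper bound on cone sizes coming from Goevaerts--Storme. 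For $k\ge 4$ the paper's induction is on $k$, not $m$, and again runs through the cone construction $(E_p,G)$ together with Goevaerts--Storme rather than anything resembling your sketch. Until you supply a proof of the concentration lemma --- in particular its $k=3$ case --- the argument is a reformulation of the problem, not a solution.
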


This theorem is tight for $r=4$ and $n=3$ because of $M(K_5)$ and tight
for all integers $r\geq 4$ and $n\geq 3$ with $r\geq n$  because
of constructions based on $M(K_5)$.

Bruen and Wehlau,
building upon results by Davydov and Tombak~[\ref{dt}],
give a precise description of all simple
triangle-free binary matroids with density at least $\frac{1}{4}$
(see~[\ref{bw}]).
By applying this description to $K$ in Theorem~\ref{struct},
we get a precise description of all simple $\PG(n-1,2)$-free binary matroids
with density at least $1-\frac{3}{2^n}$.
As a consequence of these results, 
we get Corollary~\ref{max_size} and Corollary~\ref{tidor_thm}.

\begin{corollary}\label{max_size}
For integers $r$ and $n$ with $r\geq n\geq 2$,
if $M$ is a maximal simple rank-$r$ triangle-free binary matroid with
$|M|> \left(1-\frac{3}{2^{n}}\right)2^{r}$,
then $|M| = \left(1-\frac{3}{2^{n}}\right)2^{r}+2^k$ for some integer $k \geq 1$.
\end{corollary}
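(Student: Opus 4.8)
The plan is to combine Theorem~\ref{struct} with the description, due to Bruen and Wehlau (and resting on Davydov--Tombak), of all dense simple triangle-free binary matroids. First note that a triangle is a restriction of $\PG(n-1,2)$ whenever $n\ge 2$, so $M$ is $\PG(n-1,2)$-free; moreover $1-\tfrac{3}{2^{n}}\ge\tfrac14$, so the hypothesis gives $|M|>\tfrac14\cdot 2^{r}$, and Theorem~\ref{struct} produces a triangle-free corank-$(n-2)$ flat $K$ of $M$ with $|K|>\tfrac14\cdot 2^{r(K)}$. Since $M$ is triangle-free we have $|M|\le 2^{r-1}$, while $2^{r-1}<\bigl(1-\tfrac{3}{2^{n}}\bigr)2^{r}$ for every $n\ge 3$; hence the hypothesis is vacuous unless $n=2$, and for $n=2$ the flat $K$ is all of $M$. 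So it suffices to prove: a maximal simple triangle-free binary matroid $M$ of rank $r$ with $|M|>2^{r-2}$ has $|M|=2^{r-2}+2^{k}$ for some integer $k\ge 1$, i.e. $|M|-2^{r-2}$ is a power of two that is at least $2$. (Running the same scheme through the flat $K$ and then reconstructing $M$ around $K$ is, likewise, what yields the promised structural description of all dense $\PG(n-1,2)$-free binary matroids.)

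To establish this I would invoke the Bruen--Wehlau classification: up to projective equivalence, every simple triangle-free binary matroid of density at least $\tfrac14$ lies in an explicit list, and in particular the \emph{complete} caps of such density are enumerated together with their cardinalities as functions of the rank. A maximal $M$ with $|M|>2^{r-2}$ is exactly such a complete cap, so it remains to walk through the list. The affine hyperplanes give $|M|=2^{r-1}=2^{r-2}+2^{r-2}$. Every other member is obtained from one of a bounded collection of small ``seed'' caps by rank-raising lifting steps, and each such step transforms the cardinality in a controlled way --- for instance the doubling step $N\mapsto N\cup(N+e)$ carries a complete cap of rank $s$ and size $t$ to one of rank $s+1$ and size $2t$, so $2^{s-2}+2^{k}$ becomes $2^{(s+1)-2}+2^{k+1}$ --- whence the shape $2^{s-2}+2^{k}$ is preserved along the construction and one only has to verify it for the finitely many seeds. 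This gives $|M|=2^{r-2}+2^{k}=\bigl(1-\tfrac{3}{2^{2}}\bigr)2^{r}+2^{k}$ with $k\ge 1$.

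The crux, and the only real obstacle, is that last verification: reading off from the Bruen--Wehlau description the exact list of complete caps of density exceeding $\tfrac14$, computing their sizes, and checking that each seed cap $N$ of rank $s$ has $|N|-2^{s-2}$ equal to a power of two that is at least $2$, so that imposing the \emph{strict} inequality $|M|>\bigl(1-\tfrac{3}{2^{n}}\bigr)2^{r}$ indeed forces $k\ge 1$ rather than merely $k\ge 0$. Granting this, Corollary~\ref{max_size} is a purely numerical consequence of Theorem~\ref{struct} together with the Bruen--Wehlau theorem, requiring no further structural input.
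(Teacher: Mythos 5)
The paper supplies no written proof of this corollary; it is asserted to follow from Theorem~\ref{struct} together with the Bruen--Wehlau/Davydov--Tombak classification, so your overall strategy is the intended one. The problem is that your reduction rests on reading the hypothesis ``triangle-free'' literally, which (as you yourself compute) makes the statement vacuous for $n\geq 3$ and, for $n=2$, makes it a restatement of Davydov--Tombak in which Theorem~\ref{struct} plays no role at all. That reading cannot be the intended one: the corollary is introduced as a consequence of applying the classification to the flat $K$ produced by Theorem~\ref{struct}, and ``triangle-free'' here is surely a slip for ``$\PG(n-1,2)$-free''. The argument you relegate to a parenthesis is therefore the actual proof, and it is not carried out. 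Concretely: Theorem~\ref{struct} gives a triangle-free corank-$(n-2)$ flat $K$ with $|K|>\frac14 2^{r(K)}$; setting $F:=\cl_G(K)$, the set $K\cup(G\setminus F)$ is $\PG(n-1,2)$-free, because by modularity every rank-$n$ flat of $G$ meets the corank-$(n-2)$ flat $F$ in rank at least $2$, so a copy of $\PG(n-1,2)$ would place a triangle inside $K$; maximality then forces $M=K\cup(G\setminus F)$ with $K$ a complete cap in $F\cong\PG(r-n+1,2)$, and the identity $2^r-2^{r-n+2}+2^{r-n}=\left(1-\frac{3}{2^{n}}\right)2^{r}$ reduces the corollary to the claim $|K|=2^{r(K)-2}+2^{k}$ with $k\geq 1$. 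None of these steps appear in your write-up.

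Moreover, the step you correctly label ``the crux'' --- extracting from [\ref{bw}], [\ref{dt}] that a complete cap of rank $m$ and size exceeding $2^{m-2}$ has size $2^{m-2}+2^{k}$ with $k\geq 1$ --- is precisely where all the content lives, and it is not a formality that can be waved through: for $m=2$, which occurs exactly when $r=n$, the unique complete cap has size $2=2^{0}+2^{0}$, so the strict density hypothesis does not by itself force $k\geq 1$ (compare $\PG(n-1,2)$ minus a point, a maximal example of size $\left(1-\frac{3}{2^{n}}\right)2^{n}+2^{0}$). So the ``walk through the list'' you postpone would in fact surface a boundary exception that must be excluded or handled separately, and a proof that never performs that walk has not established the statement.
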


Consider a simple rank-$r$ binary matroid $M$ 
as a restriction of a rank-$r$ projective geometry $G \cong \PG(r-1,2)$.
The {\em critical number}, $\chi(M)$, of $M$ is the minimum corank of 
the flats in $G$ that are disjoint from $M$.
Note that if $\chi(M)=n-1$, then $M$ is $\PG(n-1,2)$-free. 

Tidor~[\ref{tidor}] proved the following using Davydov and Tombak's~[\ref{dt}] result.
\begin{corollary}\label{tidor_thm}
For integers $r$ and $n$ with $r \geq n \geq 2$, 
if $M$ is a simple, rank-$r$ $\PG(n-1,2)$-free binary matroid with
$|M|> \left(1-\frac{3}{2^{n}}\right)2^{r}$, 
then $\chi(M)$ is $n$ or $n-1$.
\end{corollary}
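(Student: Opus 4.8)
The plan is to prove $n-1\le\chi(M)\le n$, the lower bound by a volume count and the upper bound by feeding Theorem~\ref{struct} into the Davydov--Tombak / Bruen--Wehlau description of dense triangle-free binary matroids.

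For the lower bound, regard $M$ as a restriction of $G\cong\PG(r-1,2)$. If $\chi(M)\le n-2$ then, after shrinking to a subflat if needed, some corank-$(n-2)$ flat $F$ of $G$ is disjoint from $M$; since such an $F$ has $2^{r-n+2}-1$ points, $|M|\le (2^{r}-1)-(2^{r-n+2}-1)=2^{r}-2^{r-n+2}=\left(1-\tfrac{4}{2^{n}}\right)2^{r}$, contradicting $|M|>\left(1-\tfrac{3}{2^{n}}\right)2^{r}$. Hence $\chi(M)\ge n-1$.

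For the upper bound, Theorem~\ref{struct} supplies a triangle-free corank-$(n-2)$ flat $K$ of $M$ with $|K|>\tfrac14 2^{r(K)}$, so $r(K)=r-n+2$. As $K$ is a flat of $M$ we have $K=M\cap\langle K\rangle$, where $\langle K\rangle$ is the rank-$r(K)$ flat of $G$ spanned by $K$, identified with $\PG(r(K)-1,2)$. The key point is that a simple triangle-free binary matroid of density greater than $\tfrac14$ has critical number at most $2$ --- this follows from the structural results of Davydov and Tombak~[\ref{dt}] (equivalently, every member of the Bruen--Wehlau family~[\ref{bw}] has critical number at most $2$). Applying this to $K$ gives a flat $F'$ of $\langle K\rangle$ of corank at most $2$ with $F'\cap K=\emptyset$. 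Then $M\cap F'\subseteq M\cap\langle K\rangle=K$, so $M\cap F'=\emptyset$; and viewed inside $G$ the flat $F'$ has corank equal to $(n-2)+(\text{its corank in }\langle K\rangle)\le n$. Therefore $\chi(M)\le n$, and combining the two bounds yields $\chi(M)\in\{n-1,n\}$.

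The main obstacle is the ``key point'': extracting from the Davydov--Tombak and Bruen--Wehlau descriptions the clean bound that density greater than $\tfrac14$ forces critical number at most $2$ for triangle-free binary matroids, and checking that $K$ meets any rank hypotheses those results carry (the few very small values of $r(K)$ handled directly). The rest is routine flat arithmetic; in particular, once Theorem~\ref{struct} is in hand the reduction from the $\PG(n-1,2)$-free case to the triangle-free case is immediate, and the $n=2$ instance of the Corollary is exactly the Davydov--Tombak statement applied to $M$ itself.
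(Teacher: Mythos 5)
Your proposal is correct and follows exactly the route the paper intends: the lower bound $\chi(M)\geq n-1$ is the routine Bose--Burton-style count, and the upper bound comes from applying the Davydov--Tombak/Bruen--Wehlau fact that dense triangle-free matroids have critical number at most $2$ to the flat $K$ produced by Theorem~\ref{struct}. The ``key point'' you flag is precisely the $n=2$ base case that the paper also takes from the literature, so there is no gap beyond that acknowledged citation.
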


In contrast, we have the following result due to Geelen and Nelson~[\ref{gn}].
\begin{corollary}
Fix some integer $n \geq 2$.
For each $\epsilon>0$ and each integer $c\geq n$,
there is a simple, $\PG(n-1,2)$-free, binary matroid $M$
such that $|M| \geq \left(1-\frac{3}{2^{n}}-\epsilon \right)2^{r}$ 
and $\chi(M)=c$.
\end{corollary}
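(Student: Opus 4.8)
The plan is to produce $M$ as a restriction of a projective geometry $\PG(r-1,2)$, for a rank $r$ chosen large in terms of $n$, $c$ and $\epsilon$, and to work with the complementary set $B:=\PG(r-1,2)\setminus M$. The three requirements translate cleanly: $M$ is $\PG(n-1,2)$-free precisely when every rank-$n$ flat of $\PG(r-1,2)$ meets $B$; one has $\chi(M)=c$ precisely when $B$ contains a flat of corank $c$ but no flat of corank $c-1$ (equivalently, none of corank at most $c-1$, since a flat of smaller corank contains one of corank $c-1$); and $|M|\ge(1-3/2^{n}-\epsilon)2^{r}$ precisely when $|B|\le(3/2^{n}+\epsilon)2^{r}$. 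So it suffices to construct such a set $B$ — a ``small, spread-out blocking set for rank-$n$ flats''.

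I would build $B$ with a fibred structure. Fix a flat $E$ of corank $c$, let $\pi$ be the projection $\PG(r-1,2)\dashrightarrow\PG(r-1,2)/E\cong\PG(c-1,2)$, and for $\bar z\in\PG(c-1,2)$ let $C_{\bar z}$ be the corresponding coset, a set of $2^{r-c}$ points that is automatically line-free. Take subsets $\bar M_{\bar z}\subseteq C_{\bar z}$ of density a bit below $3/2^{n}$, put $B:=E\cup\bigcup_{\bar z}\bar M_{\bar z}$, and write $M=\bigcup_{\bar z}M_{\bar z}$ with $M_{\bar z}=C_{\bar z}\setminus\bar M_{\bar z}$; the point-count of $E$ is a lower-order term absorbed by $\epsilon$ once $r$ is large. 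Two reductions then occur. First, because each $C_{\bar z}$ is line-free and $M\cap E=\emptyset$, a rank-$n$ flat contained in $M$ must be a ``transversal'' flat $H$ on which $\pi$ is injective with image a rank-$n$ flat of $\PG(c-1,2)$; so $\PG(n-1,2)$-freeness amounts to a family of conditions indexed by the $O_{n,c}(1)$ rank-$n$ flats $K$ of $\PG(c-1,2)$, each asking that no linear lift of $K$ lie entirely in $M$. Second, $M$ automatically avoids $E$, so $\chi(M)\le c$, and a case analysis on the two ways a corank-$(c-1)$ flat can meet $E$ shows that $\chi(M)=c$ as soon as each $\bar M_{\bar z}$ contains no affine subspace of $C_{\bar z}$ of dimension $\ge r-2c+1$ — a spread-out condition that a generic density-$(3/2^{n})$ subset of $C_{\bar z}$ satisfies for $r$ large, by a union bound over the relevant affine subspaces (there are about $2^{O(r^{2}/n)}$ of them, beaten by a factor $(3/2^{n})^{2^{\,r-2c+1}}$).

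The crux — and the step I expect to be the main obstacle — is choosing the $\bar M_{\bar z}$ to meet both reductions at once, since they pull in opposite directions: the critical-number condition wants the fibres generic, yet a purely random dense choice makes $M$ contain transversal copies of $\PG(n-1,2)$, while the most obvious structured choice ($\bar M_{\bar z}=v_{\bar z}+U$ for a fixed subspace $U$ and \emph{linear} coset representatives $v_{\bar z}$) makes $B$ equal to $E$ together with a flat of corank $n-1$, collapsing $\chi(M)$ back to $n-1$. The way to thread this (and the heart of the argument in~[\ref{gn}]) is to keep the fibres as translates $\bar M_{\bar z}=v_{\bar z}+R$ of one generic spread-out set $R$ — which preserves the ``no large affine subspace'' property — but to choose the representatives $v_{\bar z}$ \emph{nonlinearly}, subject to a coboundary-type condition along the rank-$n$ flats of $\PG(c-1,2)$ that forces every transversal lift to exit $M$ somewhere (for $n=2$ this is the existence of a suitable $\varepsilon\colon\PG(c-1,2)\to\GF(2)$ with a prescribed nonzero sum on every line). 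One then has to verify that this nonlinear twist genuinely prevents $B$ from containing any flat of corank less than $c$ — including ``mixed'' flats not contained in $E$ — which I expect to be the most delicate bookkeeping, and finally take $r$ large enough that all estimates hold and the density deficit is below $\epsilon$; a single explicit $M$ of that rank is all that the statement asks for.
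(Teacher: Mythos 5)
First, a caveat: the paper does not prove this corollary --- it is quoted from Geelen and Nelson~[\ref{gn}] --- so there is no in-paper argument to compare yours against; I am assessing your construction on its own terms. Your translation into a blocking-set problem, the fibred setup over a corank-$c$ flat, the reduction of $\PG(n-1,2)$-freeness to blocking the transversal lifts of the rank-$n$ flats of $\PG(c-1,2)$, and the observation that only corank-$(c-1)$ flats need excluding from $B$ are all correct.

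The gap is at what you yourself flag as the crux, and the resolution you propose cannot work. Take $n=2$, write $W$ for the subspace underlying your corank-$c$ flat, and set $A:=W\setminus R$, so that $M\cap C_{\bar z}=v_{\bar z}+A$ with $|A|\approx\frac{1}{4}|W|$. A transversal line over a line $\ell=\{\bar z_1,\bar z_2,\bar z_3\}$ of $\PG(c-1,2)$ lies entirely in $M$ if and only if $\delta_\ell:=v_{\bar z_1}+v_{\bar z_2}+v_{\bar z_3}\in A+A+A$, so your coboundary condition is $\delta_\ell\notin A+A+A$ for every line $\ell$. But if $R$ is generic --- indeed if $A$ is merely $\epsilon$-uniform in $W$ for $\epsilon<\frac{1}{12}$, which is exactly your ``spread-out'' regime --- then the Fourier computation of Theorem~\ref{tri_counting}, run with target $\delta_\ell$ in place of $0$, gives at least $\left(\alpha^3-\epsilon\alpha(1-\alpha)\right)|W|^2>0$ solutions to $a_1+a_2+a_3=\delta_\ell$ in $A$; hence $A+A+A=W$ and \emph{no} choice of representatives $v_{\bar z}$ exists. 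The fibres are therefore forced to be highly non-uniform; in the extremal case $A$ is a coset of an index-$4$ subgroup $T\leq W$, so each $\bar M_{\bar z}$ contains affine subspaces of dimension $r-c-2\geq r-2c+1$ (for $c\geq 3$), and your sufficient criterion for $\chi(M)=c$ is provably unattainable alongside the blocking condition (for general $n$ the tension is worse, since $A$ must have density $1-\frac{3}{2^n}$). Consequently both halves of the conclusion are left unestablished: the generic-$R$-plus-nonlinear-twist mechanism does not yield $\PG(n-1,2)$-freeness, and with structured fibres you have no argument for $\chi(M)\geq c$. What the actual construction does is take the fibres to be full cosets of a fixed subspace $T$ of bounded codimension, descend to the bounded-rank quotient, and there exhibit explicitly (recursively) a triangle-free set of density near $\frac{1}{4}$ with critical number $c$; that bounded-rank object is the real content of~[\ref{gn}] and is missing from your proposal.
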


Our proof of Theorem~\ref{struct} is by induction on $n$.

Our proof of this theorem
is inspired by Tidor's~[\ref{tidor}]
proof of Corollary~\ref{tidor_thm}, which is based the proof of Theorem~\ref{GS}
due to Goevaerts and Storme~[\ref{gs}].
They, in turn, get this inductive method from Beutelspacher~[\ref{b80}].

For the case $n=3$ we use a variation of Green's counting lemma~[\ref{green}]
from additive combinatorics.
Similar techniques are used in Tidor~[\ref{tidor}] and Geelen and Nelson~[\ref{gn}].

Our results are analogous to, and motivated by,
results on dense graphs with small clique number.
In particular, Theorem~\ref{struct} is the analogue of the following theorem due to Goddard and Lyle~[\ref{gl}].
\begin{theorem} \label{graphstruct}
For integers $k$ and $t$ with $k\geq t\geq 3$,
if $G$ is a simple $K_t$-free graph on $k$ vertices and with minimum degree
$\delta(G) > \frac{2t-5}{2t-3} k$,
then there is a partition of $G$ into a triangle-free
graph $H$ and a $(t-3)$-colourable graph $T$.
Furthermore, $H$ has minimum degree $\delta(H) > \frac{|V(H)|}{3}$.
\end{theorem}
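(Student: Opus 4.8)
The plan is to prove the statement by induction on $t$. When $t=3$ the graph $G$ is $K_3$-free and hence already triangle-free, so $H=G$ and $T$ the empty graph (which is $0$-colourable, hence $(t-3)$-colourable) give the required partition, and $\delta(H)=\delta(G)>\tfrac13 k=\tfrac13|V(H)|$ is exactly the hypothesis. So assume $t\geq4$ and that the result holds with $t-1$ in place of $t$. First I would dispose of the case that $G$ has no $K_{t-1}$: since $x\mapsto 1-\tfrac2x$ is increasing we have $\tfrac{2t-5}{2t-3}=1-\tfrac{2}{2t-3}>1-\tfrac{2}{2(t-1)-3}=\tfrac{2(t-1)-5}{2(t-1)-3}$, so the inductive hypothesis applies to $G$ with parameter $t-1$ and produces a triangle-free $H$ with $\delta(H)>\tfrac13|V(H)|$ together with a $(t-4)$-colourable graph $T$; a $(t-4)$-colourable graph is $(t-3)$-colourable, so this is the desired partition. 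Hence from now on $G$ contains a clique $S=\{s_1,\dots,s_{t-1}\}$.

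For a vertex $v$, let $M(v)=\{\,i : v s_i\notin E(G)\,\}$; since $G$ is loopless this has $i\in M(v)$ also when $v=s_i$, and since $G$ is $K_t$-free no vertex is adjacent to all of $S$, so $M(v)\neq\emptyset$ while $M(s_i)=\{i\}$. Let $V_i=\{v:M(v)=\{i\}\}$ for $1\leq i\leq t-1$ and $W=\{v:|M(v)|\geq2\}$; these sets partition $V(G)$ and $s_i\in V_i$. I would then record three facts. (i) Each $V_i$ is independent: an edge $uv$ inside $V_i$ would make $\{u,v\}\cup(S\setminus\{s_i\})$ a copy of $K_t$. (ii) More generally, the $M$-sets of the three vertices of any triangle of $G$ have union of size at least $3$: otherwise this union lies in some pair $\{a,b\}$, each triangle vertex is then adjacent to all $t-3$ of the vertices $s_\ell$ with $\ell\notin\{a,b\}$, and these together with the triangle form a copy of $K_t$. (iii) $W$ is small: summing $|\{v:vs_i\notin E(G)\}|=k-\deg_G(s_i)<k-\tfrac{2t-5}{2t-3}k=\tfrac{2}{2t-3}k$ over $i$ and comparing with $\sum_v|M(v)|\geq(k-|W|)+2|W|$ gives $|W|<\tfrac1{2t-3}k$, and the same degree estimate gives $|V_i|<\tfrac{2}{2t-3}k$ for each $i$. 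In particular $V_1,\dots,V_{t-1}$ properly colour $G-W$.

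The remaining task is to show that, after relabelling the classes, the graph $H:=G[V_1\cup V_2\cup W]$ is triangle-free for a suitable choice of the two distinguished classes. Granting this, put $T:=G[V_3\cup\dots\cup V_{t-1}]$: each $V_i$ is independent, so $T$ is $(t-3)$-colourable, and $V(H),V(T)$ partition $V(G)$. By fact (iii), $|V(T)|=\sum_{i\geq3}|V_i|<\tfrac{2(t-3)}{2t-3}k$, so $|V(H)|=k-|V(T)|>\tfrac{3}{2t-3}k$; and for $v\in V(H)$ we have $\deg_H(v)\geq\deg_G(v)-|V(T)|>\tfrac{2t-5}{2t-3}k-|V(T)|$, and a short computation shows this exceeds $\tfrac13(k-|V(T)|)=\tfrac13|V(H)|$ precisely because $|V(T)|<\tfrac{2(t-3)}{2t-3}k$. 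Thus $\delta(H)>\tfrac13|V(H)|$, which completes the argument modulo the existence of a good pair of classes.

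The main obstacle is exactly that claim, which amounts to controlling the triangles of $G$ that meet $W$. By facts (i) and (ii), every triangle contained in $V_i\cup V_j\cup W$ either has at least two vertices in $W$, or consists of one vertex of $V_i$, one of $V_j$, and one vertex $w\in W$ with $M(w)\not\subseteq\{i,j\}$; in particular no triangle there uses a vertex $w\in W$ with $M(w)\subseteq\{i,j\}$. One must then combine this with the smallness of $W$ from fact (iii) to show that the family of ``bad'' pairs $\{i,j\}$ --- those for which $G[V_i\cup V_j\cup W]$ has a triangle --- cannot be the family of all pairs, so that some pair may serve as $\{V_1,V_2\}$; the natural route is to suppose every pair is bad and extract either a copy of $K_t$ or a vertex $v$ with $\deg_G(v)\leq\tfrac{2t-5}{2t-3}k$. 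This is where the threshold $\tfrac{2t-5}{2t-3}$ is used, and it also explains why $H$ need only be triangle-free and not bipartite: in the extremal configurations the two distinguished classes together with $W$ form a non-bipartite triangle-free graph, such as a blow-up of $C_5$, and the bound on $\delta(G)$ cannot be weakened.
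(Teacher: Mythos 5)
There is a genuine gap. The paper does not prove Theorem~\ref{graphstruct} at all --- it is quoted from Goddard and Lyle~[\ref{gl}] as motivation --- so your proposal has to stand on its own, and its central step is missing. Your reduction to the case where $G$ contains a $(t-1)$-clique $S$, the definition of $M(v)$, the independence of the classes $V_i$, and the counting bounds $|W|<\frac{1}{2t-3}k$ and $|V_i|<\frac{2}{2t-3}k$ are all correct (and the degree computation showing $\delta(H)>\frac{1}{3}|V(H)|$ from $|V(T)|<\frac{2(t-3)}{2t-3}k$ checks out). But the entire content of the theorem is then concentrated in the claim that, for some pair $\{i,j\}$, the graph $G[V_i\cup V_j\cup W]$ is triangle-free, and you explicitly leave this ``modulo the existence of a good pair of classes,'' offering only the suggestion that one should assume every pair is bad and extract a $K_t$ or a low-degree vertex. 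That is not a proof sketchable in a line; it is where all the difficulty lives, and as stated your setup cannot deliver it.

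Concretely, the choice of the pair $\{i,j\}$ has no influence on triangles that have two or three vertices in $W$: such triangles lie in $G[V_i\cup V_j\cup W]$ for \emph{every} pair, so if $W$ spans a triangle (or if some vertex of $V_i$ forms a triangle with two vertices of $W$ for every admissible $i$), no pair is good and the strategy collapses. None of your facts (i)--(iii) excludes this: fact (ii) only says the three $M$-sets of a triangle cover at least three indices, which is perfectly consistent with all three vertices lying in $W$, and fact (iii) only bounds $|W|$, which does not prevent it from containing edges or triangles. You would need a separate structural argument about $W$ (e.g.\ that triangles meeting $W$ in two or more vertices force either a $K_t$ or a vertex of degree at most $\frac{2t-5}{2t-3}k$), and this is precisely the part of the Goddard--Lyle argument that requires real work --- their proof does not follow from a single-clique partition of this kind, and in particular the base case $t=4$ already relies on nontrivial structure theory for dense $K_4$-free graphs rather than on choosing two classes of a $(t-1)$-clique decomposition. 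As it stands, the proposal proves the easy reductions but not the theorem.
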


This can be combined with Brandt and Thomass\'{e}'s~[\ref{bt}]
description of all triangle-free graphs $H$ with minimum degree
$\delta(H) > \frac{|V(H)|}{3}$ to give a complete characterization
of all simple $K_t$-free graphs on $k$ vertices with minimum degree greater than $\frac{2t-5}{2t-3} k$.

%
\section{Preliminaries}
\subsection{Geometry}
Simple $\PG(n-1,2)$-free binary matroids arise in various contexts~[\ref{packingproblem}].
Many previous results were originally solved in the context of geometry.
Let a {\em rank-$r$ binary representation} be an ordered pair $(E,G)$ 
comprised of an {\em ambient geometry} $G\cong \PG(r-1,2)$,
and a {\em ground set} $E$ that is a subset of the points of $G$.
We say that $(E,G)$ {\em represents} the simple binary matroid $M:=G|E$.
Note that the rank of $M$ is at most the rank of $(E,G)$, but equality need not hold.
We will say that $(E,G)$ is {\em $N$-free} when $M$ is $N$-free.
This geometric representation is convenient as it is easier to work with a concrete ambient geometry that we understand
as apposed to a general simple binary matroid.

Bose and Burton proved the following theorem geometrically~[\ref{bb}].
\begin{theorem}[Binary Bose-Burton]\label{binaryBB}
For integers $r$ and $n$ with $r \geq n \geq 2$,
if $(E,G)$ is a $\PG(n-1,2)$-free rank-$r$ binary representation,
then $|E| \leq \left(1-\frac{2}{2^{n}} \right)2^{r}$. Furthermore,
 if equality holds, then $E$ is contained in the complement of a corank-$n$ flat of $G$.
\end{theorem}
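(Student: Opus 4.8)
The plan is to pass to the complementary point set. Let $F$ be the set of points of $G$ not lying in $E$; then $(E,G)$ being $\PG(n-1,2)$-free says exactly that $F$ meets every rank-$n$ flat of $G$, and the theorem becomes the claim that $|F|\geq 2^{r-n+1}-1$, with equality only if $F$ is a flat. For the bound I would argue by maximality: pick a flat $U$ of $G$ of largest rank $s$ with $U\subseteq E$; since $G|U\cong\PG(s-1,2)$ is a restriction of $G|E$, the hypothesis gives $s\leq n-1$. For each rank-$(s+1)$ flat $V$ containing $U$, maximality forces $V\not\subseteq E$, so $V$ meets $F$, and since $U\subseteq E$ this point of $F$ lies in $V\del U$. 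Distinct rank-$(s+1)$ flats through $U$ meet only in $U$ and there are $2^{r-s}-1$ of them (they biject with the points of $G\con U$), so $|F|=|F\del U|\geq 2^{r-s}-1\geq 2^{r-n+1}-1$, which rearranges to $|E|\leq(1-\tfrac{2}{2^{n}})2^{r}$.

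Now suppose $|E|=(1-\tfrac{2}{2^{n}})2^{r}$, so $|F|=2^{r-n+1}-1$. Retracing the inequalities forces $s=n-1$ and forces each of the $2^{r-n+1}-1$ rank-$n$ flats of $G$ through $U$ to contain exactly one point of $F$; also $F\cap U=\emptyset$. Thus $f\mapsto\cl(U\cup\{f\})$ bijects $F$ with the rank-$n$ flats through $U$, equivalently with the points of $G\con U\cong\PG(r-n,2)$. The crux is to show this bijection preserves collinearity. Given distinct $f_1,f_2\in F$, the flat $V:=\cl(U\cup\{f_1,f_2\})$ has rank $n+1$ (the rank-$n$ flat $\cl(U\cup\{f_1\})$ contains only $f_1$ from $F$, so $f_2\notin\cl(U\cup\{f_1\})$), it is covered by its three rank-$n$ flats through $U$, and those pairwise meet only in $U$; hence $F\cap V$ is exactly three points $f_1,f_2,f_3$, one per flat. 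If $f_1,f_2,f_3$ were not collinear, a short inclusion--exclusion shows some hyperplane of $V$ avoids all three; being a rank-$n$ flat inside $V\del F\subseteq E$, it is a $\PG(n-1,2)$ inside $G|E$, a contradiction. So $f_3=f_1+f_2$, whence $f_1+f_2\in F$. As $f_1,f_2$ were arbitrary, $F\cup\{0\}$ is closed under addition, so it is a subspace and $F$ is a flat, necessarily of rank $r-n+1$, i.e.\ corank $n-1$. Thus $E$ is the complement of a corank-$(n-1)$ flat, and in particular is contained in the complement of a corank-$n$ flat.

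I expect the equality case to be the main difficulty, specifically the step promoting the one-point-per-flat transversal $F$ to a flat via collinearity-preservation. The inequality is a clean greedy argument once the maximal flat $U\subseteq E$ is in hand, but the collinearity claim relies on two local facts around $U$ --- that a rank-$(n+1)$ flat through $U$ is the union of its three rank-$n$ flats through $U$, and that any three non-collinear points of such a flat are avoided by some hyperplane --- the second of which is a small inclusion--exclusion count that must also be checked in the degenerate cases (notably $n=2$, where ``hyperplane of $V$'' means a line), though this is routine.
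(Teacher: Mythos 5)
The paper does not prove this statement --- it is quoted as a classical result of Bose and Burton with a citation --- so there is no internal proof to compare against; what matters is whether your argument stands on its own, and it does. Your complementation to a blocking set $F$, the greedy bound via a maximal flat $U\subseteq E$ and the $2^{r-s}-1$ rank-$(s+1)$ flats through it, and the equality analysis are all correct. The one step you left as a sketch checks out: for three linearly independent (i.e.\ non-collinear, nonzero) points $f_1,f_2,f_3$ of $V\cong\PG(n,2)$, the hyperplanes of $V$ avoiding all three correspond to the nonzero solutions of the affine system $\gamma\cdot f_i=1$ ($i=1,2,3$), of which there are exactly $2^{n-2}\geq 1$ for $n\geq 2$ (equivalently, your inclusion--exclusion gives $(2^{n+1}-1)-3(2^n-1)+3(2^{n-1}-1)-(2^{n-2}-1)=2^{n-2}$), and this covers the degenerate case $n=2$ where the count is $1$. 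Note also that you actually prove the sharper standard conclusion --- that in the equality case $E$ is precisely the complement of a corank-$(n-1)$ flat --- which implies the paper's weaker phrasing (``contained in the complement of a corank-$n$ flat'') since every corank-$(n-1)$ flat contains a corank-$n$ flat (the empty flat when $r=n$). The only cosmetic caveat is the vacuous boundary case $r=n$, where $|F|=1$ and the collinearity step is not needed; you may want to say so explicitly.
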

Goevaerts and Storme~[\ref{gs}] built upon this by proving the same 
conclusion for binary representations that have size close to the maximum.
\begin{theorem}[Goevaerts-Storme]\label{GS}
For integers $r$ and $n$ with $n \geq 2$ and $r \geq n+2$,
if $(E,G)$ is a $\PG(n-1,2)$-free rank-$r$ binary representation
 with $|E| > \left(1-\frac{2}{2^n}-\frac{3}{2^{n+2}}\right)2^{r}$,
then $E$ is contained in the complement of a corank-$n$ flat of $G$.
\end{theorem}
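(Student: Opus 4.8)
I would argue with the complement $D := G\del E$: the hypothesis reads $|D| < 2^{r-n+1}+3\cdot 2^{r-n-2}-1$, being $\PG(n-1,2)$-free means exactly that $D$ meets every rank-$n$ flat of $G$, and the conclusion is equivalent to saying that $D$ contains a flat of rank $r-n$. I would induct on $n$; throughout put $k:=r-n\geq 2$. For the base case $n=2$: $D$ meets every line of $\PG(r-1,2)$ with $r\geq 4$, and I claim $D$ contains a line. If not, every line has at most two points of $D$, so through a fixed $p\in D$ the other $|D|-1$ points of $D$ lie on distinct lines, whence $|D|\leq 2^{r-1}$; also $|D|\geq 2^{r-1}-1$ by Theorem~\ref{binaryBB}. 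If $|D|=2^{r-1}-1$, then the equality clause of Theorem~\ref{binaryBB} places inside $D$ a corank-$2$ flat (rank $r-2\geq 2$, hence line-containing); if $|D|=2^{r-1}$, then every line is a $2$-secant and equating the number of lines of $\PG(r-1,2)$ with $\binom{|D|}{2}$ forces $r=2$. Both are contradictions.

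For the inductive step, $n\geq 3$, the engine is: \emph{if some hyperplane $H$ of $G$ has $(E\cap H,H)$ $\PG(n-2,2)$-free, we are done.} Indeed $H\cong\PG(r-2,2)$ has rank $r-1\geq(n-1)+2$ and
\[
|E\cap H| = (2^{r-1}-1)-|D\cap H|\ \geq\ (2^{r-1}-1)-|D|\ >\ \Bigl(1-\tfrac{2}{2^{n-1}}-\tfrac{3}{2^{(n-1)+2}}\Bigr)2^{r-1},
\]
so the inductive hypothesis with $n-1$ in place of $n$ applies to $(E\cap H,H)$ and hands back a corank-$(n-1)$ flat of $H$ --- a rank-$(r-n)$ flat $F^{\ast}\subseteq H$ --- with $F^{\ast}\cap E=\varnothing$, i.e.\ $F^{\ast}\subseteq D$. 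The same reasoning covers the case where $D$ does not span $G$: any hyperplane $H\supseteq D$ then has $(E\cap H,H)$ $\PG(n-2,2)$-free, since a $D$-point on a rank-$n$ flat $\langle L,p\rangle$ (with $L$ a rank-$(n-1)$ flat of $H$ and $p\notin H$) must lie in $\langle L,p\rangle\cap H = L$. So we may assume $D$ spans $G$, and it remains to produce one hyperplane $H$ for which $E\cap H$ contains no rank-$(n-1)$ flat.

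Suppose not: every hyperplane $H$ contains a rank-$(n-1)$ flat $L_{H}\subseteq E$. Fix such an $L$. The rank-$n$ flats through $L$ are exactly the fibres of $G\del L\to G\con L\cong\PG(r-n,2)$, each of size $2^{n-1}$; since $D\cap L=\varnothing$ and $D$ meets all $2^{r-n+1}-1$ of them while $|D|<(2^{r-n+1}-1)+3\cdot 2^{r-n-2}$, the set $D$ is a \emph{near-transversal} of this fibration (fewer than $3\cdot 2^{r-n-2}$ fibres are met more than once), and likewise for the fibration attached to the rank-$(n-1)$ flat sitting inside $E\cap H$ for \emph{every} hyperplane $H$; in particular $|D\del H|\geq 2^{r-n}$ for all $H$, as the $2^{r-n}$ affine rank-$(n-1)$ flats outside $H$ in that direction partition $G\del H$ and each meets $D$. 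I would then double count $\sum_{H}|D\del H| = |D|\cdot 2^{r-1}$ against these per-hyperplane lower bounds --- sharpened using the simultaneous near-transversality --- to contradict the hypothesis on $|D|$; the constant $\tfrac{3}{2^{n+2}}$ is precisely the margin this computation absorbs, with equality at the $M(K_{5})$-based configurations mentioned after Theorem~\ref{struct}. I expect this last step --- turning ``$D$ is a near-transversal for every rank-$(n-1)$ flat of $E$ that lies in a hyperplane'' into a bound on $|D|$ beating the hypothesis, with the count calibrated to $3/2^{n+2}$ and the few small instances ($k$ small, $n=3$) checked by hand --- to be the main obstacle; passing to the complement, verifying the size hypothesis when feeding $(E\cap H,H)$ to the inductive hypothesis, and translating back are routine.
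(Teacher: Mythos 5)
The paper itself does not prove this statement---it is quoted from Govaerts and Storme~[\ref{gs}]---so your sketch has to stand on its own, and as written it has two genuine gaps. First, the base case $n=2$ does not prove what the statement asserts. After passing to $D=G\del E$ you correctly record that the conclusion means $D$ contains a flat of rank $r-n$; for $n=2$ this is a rank-$(r-2)$ flat, which is a line only when $r=4$. Your counting argument (assume $D$ contains no line, deduce $2^{r-1}-1\le |D|\le 2^{r-1}$, and rule out both values) at best shows that $D$ contains \emph{some} line, which for $r\ge 5$ is far weaker than a corank-$2$ flat, so the induction has nothing to stand on except when $r=n+2$. Moreover, the true content of the $n=2$ case is the Davydov--Tombak classification of large caps~[\ref{dt}] (the input to~[\ref{bw}] and~[\ref{gs}]): a triangle-free set of density greater than $\tfrac{5}{16}$ lies in the complement of a \emph{hyperplane}. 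No elementary two-secant count of the kind you propose delivers this; note also that the paper's own application of the present theorem (in Claim~\ref{upper} inside Theorem~\ref{fanofree}) uses exactly that hyperplane form, so even a completed version of your weaker conclusion would not serve the role the theorem plays here.

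Second, the heart of your inductive step is missing. Your reduction is fine: if some hyperplane $H$ has $(E\cap H,G|H)$ $\PG(n-2,2)$-free, the size computation you give does put $(E\cap H, G|H)$ within the inductive hypothesis at level $n-1$, and your treatment of the case where $D$ does not span $G$ is correct; this is the same Beutelspacher-style skeleton the original proof uses. But when $D$ spans $G$ and every hyperplane contains a rank-$(n-1)$ flat inside $E$, the double count you set up yields only $|D|\cdot 2^{r-1}=\sum_H|D\del H|\ge (2^r-1)\,2^{r-n}$, i.e.\ $|D|\ge 2^{r-n+1}$, which does not contradict the hypothesis $|D|<2^{r-n+1}+3\cdot 2^{r-n-2}-1$; the entire theorem lives in the ``sharpening using simultaneous near-transversality'' that you defer, and you acknowledge as much. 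With the base case failing for $r\ge 5$ and the key counting left unexecuted, the proposal does not establish the theorem; to complete it along these lines you would need to import the cap classification for the bottom level and carry out (or replace) the refined count, which is precisely what Govaerts and Storme do.
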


\subsection{Inductive lemmas}
These are the relevant lemmas that appear
in some analogous form in the papers using Beutelspacher's~[\ref{b80}]
inductive technique.


\begin{lemma}\label{Hsize}
For integers $r$ and $n$ with $r \geq n \geq 3$, 
let $(E,G)$ be a $\PG(n-1,2)$-free rank-$r$ binary representation of
a simple binary matroid $M$.
If $H$ is a hyperplane of $G$ such that
$(E \cap H,G|H)$ is not $\PG(n-2,2)$-free, then
$|E \del H| \leq \left(1-\frac{1}{2^{n-1}} \right) 2^{r-1}$.
Furthermore, if $|E| > \left( 1-\frac{3}{2^n}\right)2^r$,
then $|E \cap H|>\left(1-\frac{2}{2^{n-1}}\right)2^{r-1}$.
\end{lemma}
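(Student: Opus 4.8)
The plan is to pass to coordinates and exploit the coset structure of the complement of a hyperplane. Realize $G$ over $\GF(2)$, so that the points of $G$ are the nonzero vectors of $\GF(2)^r$, a rank-$k$ flat is a $k$-dimensional subspace with the zero vector deleted, and $G \del H$ is an affine space $\AG(r-1,2)$ on $2^{r-1}$ points. The first step is to upgrade the hypothesis that $(E\cap H,\, G|H)$ is not $\PG(n-2,2)$-free into the existence of an honest flat: a $\PG(n-2,2)$-restriction is a set $S\subseteq E\cap H$ with $\rank(S)=n-1$ and $|S|=2^{n-1}-1$; since a rank-$(n-1)$ flat of $G$ has exactly $2^{n-1}-1$ points, $S$ must itself equal a rank-$(n-1)$ flat $F$ of $G$, with $F\subseteq H$ (as $H$ is a flat) and $F\subseteq E$.

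Let $V\leq\GF(2)^r$ be the $(n-1)$-dimensional subspace with $F=V\setminus\{0\}$. Since $F\subseteq H$, each coset $x+V$ with $x\in G\del H$ lies entirely in $G\del H$, so $G\del H$ is partitioned into $2^{r-1}/2^{n-1}=2^{r-n}$ cosets of $V$, each of size $2^{n-1}$. For any $x\in G\del H$ the set $F\cup(x+V)$ is exactly the rank-$n$ flat $\cl_G(F\cup\{x\})$, which as a matroid is isomorphic to $\PG(n-1,2)$. Because $M=G|E$ is $\PG(n-1,2)$-free and $F\subseteq E$, no coset $x+V$ can be fully contained in $E$; hence $E$ meets each of the $2^{r-n}$ cosets in at most $2^{n-1}-1$ points. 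Summing over the cosets,
\[
|E\del H|\;\leq\;2^{r-n}\bigl(2^{n-1}-1\bigr)\;=\;2^{r-1}-2^{r-n}\;=\;\Bigl(1-\tfrac{1}{2^{n-1}}\Bigr)2^{r-1},
\]
which is the first claimed bound.

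For the ``furthermore'', assume $|E|>\bigl(1-\tfrac{3}{2^n}\bigr)2^r$. Since $|E\cap H|=|E|-|E\del H|$, the bound just proved gives
\[
|E\cap H|\;>\;\Bigl(1-\tfrac{3}{2^n}\Bigr)2^r-\Bigl(1-\tfrac{1}{2^{n-1}}\Bigr)2^{r-1},
\]
and a direct computation shows the right-hand side equals $\bigl(1-\tfrac{2}{2^{n-1}}\bigr)2^{r-1}$, as required.

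The whole argument is a packing count, so I do not expect a serious obstacle; the only place that needs a little care is the first step, recognizing that a $\PG(n-2,2)$-restriction of $E\cap H$ is forced to be a genuine rank-$(n-1)$ flat of $G$ sitting inside both $H$ and $E$ (this uses only that $\PG(n-2,2)$ has exactly as many points as a rank-$(n-1)$ flat). After that, the coset bookkeeping and the two arithmetic estimates are routine.
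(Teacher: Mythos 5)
Your proof is correct and is essentially the paper's argument in coordinates: the paper contracts the $\PG(n-2,2)$-restriction $S$ and partitions $E\del H$ into the parallel classes of $(M\con S)\del(H\con S)$, each of size at most $2^{n-1}-1$, which is exactly your coset decomposition of $G\del H$ by the subspace $V$. The observation that the $\PG(n-2,2)$-restriction is forced to be a genuine flat, and the final arithmetic, both check out.
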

\begin{proof}
Suppose $H$ is a hyperplane of $G$ 
such that $E\cap H$ contains the points of some $S\cong \PG(n-2,2)$.
Consider the parallel classes of $(M \con S) \del (H \con S) \subseteq (G \con S) \del (H \con S)$.
Note $\si ((G \con S) \del (H \con S)) \cong \AG(r-n,2)$
so there are at most $2^{r-n}$ such sets.
Since $(E,G)$ is $\PG(n-1,2)$-free, each of
these sets must have size strictly less then $|\PG(n-1,2)|-|S|=|\AG(n-1,2)| =2^{n-1}$.

Thus as these parallel classes form a partition of the point in $M\del H$,
$$|E\del H|=|M \del H| \leq (2^{n-1}-1) 2^{r-n} = \left( 1- \frac{1}{2^{n-1}} \right) 2^{r-1}.$$

If we also assume that $|E| > \left( 1-\frac{3}{2^n}\right)2^r$, then
$$|E\cap H|=|E|-|E\del H| >\left(1-\frac{2}{2^{n-1}}\right)2^{r-1}.$$ 
\end{proof}

Let $(E,G)$ be a binary representation, and let $p \in E$.
Define the {\em cone in $E$ at $p$} to be $(E_p, G)$ where $E_p$ is the union of all lines in $(G|E) \del \{p\}$ that span $p$ in $G$.
Equivalently $E_p$ corresponds to all points in non-trivial parallel classes of $(G|E) \con \{p\}$.

\begin{lemma}\label{cone_ind}
For integers $r$ and $n$ with $r \geq n \geq 3$,								
let $(E,G)$ be a $\PG(n-1,2)$-free rank-$r$ binary representation.
Then for each $p \in E$ 
the cone $(E_p,G)$ is a $\PG(n-2,2)$-free rank-$r$ binary representation 
with $\left|E_p\right| \geq 2 |E| - 2^{r}$.
\end{lemma}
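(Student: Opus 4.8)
The plan is to establish the two assertions separately: first the size bound $|E_p| \geq 2|E| - 2^r$, then the fact that $(E_p, G)$ is $\PG(n-2,2)$-free. For the size bound, I would work in the contraction $M' := (G|E)\con\{p\}$, whose ambient geometry is $G\con\{p\} \cong \PG(r-2,2)$. Each element $q$ of $E\del\{p\}$ maps to a point of $G\con\{p\}$, and two such elements $q, q'$ map to the same point precisely when $\{p, q, q'\}$ is a line of $G$, i.e. when $q'$ lies on the line through $p$ and $q$. The points of $G\con\{p\}$ whose parallel class in $M'$ has size $2$ (counting $q$ and $q'$) are exactly those arising from lines of $G|E$ through $p$; these contribute to $E_p$. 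The points with parallel class of size $1$ contribute nothing. Since $G\con\{p\}$ has $2^{r-1}-1$ points, the number of non-trivial (size-$2$) parallel classes is at least $(|E|-1) - (2^{r-1}-1) = |E| - 2^{r-1}$ by a counting/pigeonhole argument on the $|E|-1$ elements of $E\del\{p\}$. Each such class corresponds to a line of $G|E$ through $p$, contributing $2$ points (the two non-$p$ points of the line) to $E_p$, so $|E_p| \geq 2(|E|-2^{r-1}) = 2|E| - 2^r$, as required.

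For the $\PG(n-2,2)$-freeness of $(E_p, G)$, suppose for contradiction that $E_p$ contains the points of some $S' \cong \PG(n-2,2)$, a flat of $G$ of rank $n-1$. I claim that then $E$ contains a $\PG(n-1,2)$, contradicting the hypothesis. The key observation is that $p \notin S'$: indeed, $E_p$ by definition consists of lines spanning $p$ \emph{other than $p$ itself}, and more to the point, every point of $E_p$ lies on a line of $G|E$ through $p$. So consider the flat $T := \cl_G(S' \cup \{p\})$, which has rank $n$ (assuming $p \notin S'$, which I would need to verify carefully — if $p \in \cl_G(S')$ it requires a separate small argument). I want to show $T$ is entirely contained in $E$, which would make $E$ contain a $\PG(n-1,2)$. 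The points of $T$ are: the points of $S'$ (which are in $E_p \subseteq E$), the point $p$ (in $E$), and the points $q$ such that $q$ lies on a line through $p$ and some point $s$ of $S'$. For the last type: since $s \in E_p$, there is a line of $G|E$ through $p$ and $s$, and its third point $s^*$ is also in $E$; now $q$ lies on the line $\{p,s\}$ extended...

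Actually the cleaner approach: for each $s \in S' \subseteq E_p$, the third point $r_s$ on the line of $G$ through $p$ and $s$ satisfies $r_s \in E$ (since $s \in E_p$ means this whole line is in $E$). The set $\{r_s : s \in S'\} \cup S' \cup \{p\}$ is exactly the point set of $T = \cl_G(S'\cup\{p\})$: each point of $T\del(S'\cup\{p\})$ is the third point on a line through $p$ and a unique point of $S'$. Hence all $2^n - 1$ points of $T$ lie in $E$, so $G|E$ contains $T \cong \PG(n-1,2)$, the desired contradiction. Therefore $(E_p, G)$ is $\PG(n-2,2)$-free. The rank of $(E_p, G)$ is $r$ since its ambient geometry is $G$.

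The main obstacle I anticipate is the bookkeeping in the second part: making precise that the lines through $p$ and the points of $S'$ sweep out exactly the flat $\cl_G(S' \cup \{p\})$ and contribute precisely the missing points, together with handling the (degenerate, but necessary to rule out) possibility that $p$ itself lies in $\cl_G(S')$. In the degenerate case one would instead pick a point $p' \in E_p$ off $S'$ but on a line through $p$, replace $p$ by a suitable point to re-extend $S'$ to rank $n$, and run the same argument; alternatively, observe directly that if $p \in \cl_G(S')$ then already a smaller configuration forces $\PG(n-1,2)$. The first part's pigeonhole step is routine once the correspondence between non-trivial parallel classes of $(G|E)\con\{p\}$ and lines of $G|E$ through $p$ is stated cleanly.
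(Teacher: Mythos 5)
Your proof is correct and follows essentially the same route as the paper's: the same count of the $2^{r-1}-1$ lines of $G$ through $p$, each carrying at most two points of $E\setminus\{p\}$, gives $|E_p|\geq 2|E|-2^r$, and the same observation that $\cl_G(S'\cup\{p\})\subseteq E$ gives the $\PG(n-2,2)$-freeness. The degenerate case you flag cannot occur: a subset $S'$ of $G$ with $G|S'\cong \PG(n-2,2)$ has $2^{n-1}-1$ points and rank $n-1$, hence equals its closure $\cl_G(S')$, so $p\notin S'$ already forces $p\notin\cl_G(S')$ and no separate argument is needed.
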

\begin{proof}
%
%
If $E_p$ contained some $S\cong \PG(n-2,2)$, then $E$ contains
$\cl_G(S\cup \{p\})\cong \PG(n-1,2)$.
Therefore $(E_p,G)$ is a $\PG(n-2,2)$-free rank-$r$ binary representation.

As $p$ is a point in $G\cong \PG(r-1,2)$, there are 
$|\PG(r-2,2)|=2^{r-1}-1$ lines in $G$ that pass through $p$.
Thus,
$|E\del \{p\}| \leq \frac{\left|E_p\right|}{2} +2^{r-1}-1$,
and so
$\left|E_p\right| \geq 2 |E| - 2^{r}$.

\end{proof}

\section{Fano-free binary representations}
For the case $n=3$, we will need a specialization 
of a result in additive combinatorics.
We begin with a definition from additive combinatorics.

Let $(E,G)$ be a rank-$r$ binary representation for some integer $r\geq 1$.
For any $\epsilon >0$,
we say that $(E,G)$ is 
{\em $\epsilon$-uniform} when
for any hyperplane $H$ of $G$ we have 
$$\frac{1}{2}\left(|E|-\epsilon2^r\right) \leq |E\cap H| \leq \frac{1}{2}\left(|E|+\epsilon2^r\right).$$
This is a qualitative description of how uniformly distributed $E$ is with regards to hyperplanes of $G$.
It is analogous to uniformity for graphs.

We now give a slight strengthening of a case of Green's
 ``counting lemma in $(\mathbb{Z} / 2\mathbb{Z})^{n}$"
(see Proposition 2.3 in~[\ref{green}]). This is similar to a strengthening used in Tidor~[\ref{tidor},  Proposition 4.3].
In particular, we do not drop the $\alpha^2 |\mathbb{V}|^2$ term ---corresponding to $\gamma=0$--- in the relation~(\ref{eq:greenst}) below.

\begin{theorem}\label{tri_counting}
Let $r\geq 1$ be an integer,
let $\epsilon>0$,
and let $(E,G)$ be a $\epsilon$-uniform rank-$r$ binary distribution.
Let $\alpha = \frac{|E|}{2^r}$ and let
$T_E$ be the number of ordered triples $(x,y,z)$
of $E$ that form a triangle in $G$. 
Then
$$\left|T_E-\alpha^3 2^{2r}\right| \leq  \epsilon \left(\alpha-\alpha^2\right)2^{2r}.$$
\end{theorem}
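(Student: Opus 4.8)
The plan is to translate the statement into Fourier analysis over $\GF(2)^{r}$ and run the standard triangle-counting argument, taking care to retain the $\alpha^{2}$ term that the cruder form of the counting lemma discards. Identify the ambient geometry $G\cong\PG(r-1,2)$ with the nonzero vectors of $V=\GF(2)^{r}$, so that $|V|=2^{r}$; then three points of $G$ form a triangle (lie on a common line) exactly when they are nonzero vectors $x,y,z$ with $x+y+z=0$, and any such triple is automatically made of three distinct points since $E$ avoids the zero vector. Writing $f=\mathbf{1}_{E}\colon V\to\{0,1\}$ and $\widehat f(\phi)=\frac{1}{2^{r}}\sum_{x\in V}f(x)(-1)^{\phi\cdot x}$ for $\phi$ ranging over the dual group $V^{\ast}$, the first step is the familiar identity
$$T_{E}=\sum_{x,y\in V}f(x)f(y)f(x+y)=2^{2r}\sum_{\phi\in V^{\ast}}\widehat f(\phi)^{3},$$
obtained by expanding each copy of $f$ in characters and using orthogonality of characters over $V$. (There is no degenerate-triple correction here: if $x=y$ then $x+y=0\notin E$, so every contributing triple is a genuine triangle.)

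The second step isolates the trivial character. Since $\widehat f(0)=\frac{1}{2^{r}}|E|=\alpha$, the $\phi=0$ term contributes exactly $\alpha^{3}2^{2r}$, so
$$T_{E}-\alpha^{3}2^{2r}=2^{2r}\sum_{\phi\neq 0}\widehat f(\phi)^{3}.$$
Estimating crudely, $\bigl|\sum_{\phi\neq 0}\widehat f(\phi)^{3}\bigr|\le\bigl(\max_{\phi\neq 0}|\widehat f(\phi)|\bigr)\sum_{\phi\neq 0}\widehat f(\phi)^{2}$. Because $f$ is $\{0,1\}$-valued, Parseval gives $\sum_{\phi\in V^{\ast}}\widehat f(\phi)^{2}=\frac{1}{2^{r}}\sum_{x}f(x)^{2}=\alpha$, hence $\sum_{\phi\neq 0}\widehat f(\phi)^{2}=\alpha-\alpha^{2}$; this is precisely the point where we keep, rather than throw away, the $\widehat f(0)^{2}=\alpha^{2}$ contribution.

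The third step connects $\epsilon$-uniformity to the Fourier coefficients. Each nonzero $\phi\in V^{\ast}$ has kernel a codimension-one subspace of $V$ whose nonzero vectors form a hyperplane $H_{\phi}$ of $G$, and over $\GF(2)$ the map $\phi\mapsto H_{\phi}$ is a bijection onto the hyperplanes of $G$. Splitting $V$ according to the value of $\phi\cdot x$,
$$\widehat f(\phi)=\frac{1}{2^{r}}\bigl(|E\cap H_{\phi}|-|E\setminus H_{\phi}|\bigr)=\frac{1}{2^{r}}\bigl(2|E\cap H_{\phi}|-|E|\bigr),$$
so the defining inequalities of $\epsilon$-uniformity, namely $|E|-\epsilon2^{r}\le 2|E\cap H_{\phi}|\le|E|+\epsilon2^{r}$ for every hyperplane, say exactly that $|\widehat f(\phi)|\le\epsilon$ for all $\phi\neq 0$. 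Combining the three steps gives $|T_{E}-\alpha^{3}2^{2r}|\le 2^{2r}\cdot\epsilon\cdot(\alpha-\alpha^{2})$, which is the claim.

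I do not expect a serious obstacle: the argument is short once the dictionary ``hyperplanes of $G$ $\leftrightarrow$ nonzero characters of $\GF(2)^{r}$'' is set up. The only places that need care are bookkeeping of the Fourier normalization in the identity $T_{E}=2^{2r}\sum_{\phi}\widehat f(\phi)^{3}$, verifying that no correction for degenerate triples is needed (it is not, because $0\notin E$), and checking that the two inequalities in the definition of $\epsilon$-uniformity translate to $|\widehat f(\phi)|\le\epsilon$ with the constants matching exactly.
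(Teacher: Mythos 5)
Your proof is correct and follows essentially the same route as the paper's: identify $G$ with $\GF(2)^r\setminus\{0\}$, express $T_E$ as a sum of cubes of Fourier coefficients, isolate the trivial character, bound the nonzero coefficients by $\epsilon$-uniformity, and use Parseval while retaining the $\alpha^2$ term. The only difference is your choice of normalization for the Fourier transform, which does not change the argument.
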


\begin{proof}
This proof uses Fourier analysis, which will require us to think of 
the ground set as a subset of a vector space.
Let $\mathbb{V}= \GF(2)^r$  equipped with the standard dot product;
so $\mathbb{V}^* \cong \mathbb{V}$ canonically.
Note $\mathbb{V}\del \{0\}$ is isomorphic to $\PG(r-1,2)$ geometrically 
so we may associate $G$ with $\mathbb{V}\del\{0\}$.
Thus we can also consider $E$ as a subset of $\mathbb{V}$.

We define the Fourier transform of a function $f \colon \mathbb{V} \to \mathbb{C}$ as 
$\widehat{f} \colon \mathbb{V} \to \mathbb{R}$, where 
$\widehat{f}(\gamma)=\sum_{y\in \mathbb{V}} f(y) (-1)^{y\cdot \gamma}$.
These are projections of $f(x)$ into the orthogonal basis
$\{(-1)^{x\cdot \gamma}\}_{\gamma \in \mathbb{V}}$ of $\mathbb{R}^\mathbb{V}$.
Therefore $f(x)=\sum_{\gamma \in \mathbb{V}} \widehat{f}(\gamma) (-1)^{x\cdot \gamma}$.

We define the convolution of two functions  $f,g \colon \mathbb{V} \to \mathbb{C}$ as 
$f*g \colon \mathbb{V} \to \mathbb{R}$, where 
$f*g(x)=\sum_{y\in \mathbb{V}} f(y) g(x-y)$.

We use $1_{E}$ to denote the indicator function
of $E \subseteq \mathbb{V}$.

Note that the ordered triple $(x,y,z)$ in $E$
forms a triangle in $G \cong \mathbb{V}\del \{0\}$ if and only if
$x+y+z=0$ in $\mathbb{V}$.
Thus
\begin{equation*}
\begin{split}
T_E &= \left| \{ (x,y,z) : x,y,z \in E,~x+y+z=0 \} \right| \\
	&=\sum_{\substack{x,y,z \in \mathbb{V}\\
					x+y+z=0}} 
			1_E(x) 1_E(y) 1_E (z)\\
	&=1_E*1_E*1_E(0).
\end{split}
\end{equation*}

Using the convolution theorem (which says that
the Fourier transform of a convolution is the product of the Fourier transforms) 
we have that
$\widehat{(1_E*1_E*1_E)}(\gamma)=\widehat{1_E}(\gamma)^3$
for any $\gamma \in \mathbb{V}$.
Thus
\begin{equation*}
\begin{split}
T_E 	&=1_E*1_E*1_E(0)\\
	&= |\mathbb{V}|^{-1} \sum_{\gamma \in \mathbb{V}} \widehat{(1_E*1_E*1_E)}(\gamma)(-1)^{0\cdot \gamma}\\
	&= |\mathbb{V}|^{-1} \sum_{\gamma \in \mathbb{V}} \widehat{1_E}(\gamma)^3.
\end{split}
\end{equation*}

Note that the term for $\gamma=0$ gives $\alpha^3 |\mathbb{V}|^2$. We now bound the remainder.

For each $\gamma \in \mathbb{V} \del \{0\}$ we have a vector hyperplane $W_\gamma:=\{x\in \mathbb{V}: x\cdot \gamma=0\}$ of $\mathbb{V}$,
which gives us the hyperplane $W_\gamma\del \{0\}$ of $G\cong \mathbb{V}\del \{0\}$.
Thus as $E\subseteq \mathbb{V}\del \{0\}$ is $\epsilon$-uniform,
\begin{equation*}
\begin{split}
\epsilon |\mathbb{V}| = \epsilon 2^r &\geq |~2|E\cap(W_\gamma\del \{0\})|-|E|~| \\
				&= |~|E\cap W_\gamma|-|E\del W_\gamma|~|\\
				&= \left| \widehat{1_E}(\gamma) \right|.
\end{split}
\end{equation*}
So as $\widehat{1_E}(\gamma)^2 \geq 0$ for any $\gamma \in \mathbb{V}$,
\begin{equation*}
\begin{split}
\left|\sum_{\gamma \in \mathbb{V}\del \{0\}} \widehat{1_E}(\gamma)^3 \right|
					&\leq \epsilon |\mathbb{V}| \sum_{\gamma \in \mathbb{V}\del \{0\}} \widehat{1_E}(\gamma)^2.
\end{split}
\end{equation*}
Therefore
$$\left|T_E-\alpha^3 |\mathbb{V}|^2\right| =
|\mathbb{V}|^{-1}\left|\sum_{\gamma \in \mathbb{V}\del \{0\}} \widehat{1_E}(\gamma)^3 \right|
					\leq \epsilon \sum_{\gamma \in \mathbb{V}\del \{0\}} \widehat{1_E}(\gamma)^2.$$

By Parseval's identity (which says that the Fourier transform preserves the $L^2$-norm up to a scalar) 
we have that
\begin{equation}\label{eq:greenst}
\begin{split}
\alpha^2 |\mathbb{V}|^2+ \sum_{\gamma \in V\del \{0\}} \widehat{1_E}(\gamma)^2
		&=|\mathbb{V}| \sum_{x \in \mathbb{V}} 1_E(x)^2\\
		&=\alpha |\mathbb{V}|^2.
\end{split}
\end{equation}

So altogether, we have that
$$\left|T_E-\alpha^3 |\mathbb{V}|^2\right| \leq  \epsilon \left(\alpha-\alpha^2\right)|\mathbb{V}|^2,$$
and thus
$$\left|T_E-\alpha^3 2^{2r}\right| \leq  \epsilon \left(\alpha-\alpha^2\right)2^{2r},$$
as we wanted to show.
\end{proof}

We are now ready to prove our geometric inductive step (Theorem~\ref{geostructind}) for the case $n=3$.
A {\em fano} is a matroid isomorphic to $F_7 \cong \PG(2,2)$.

\begin{theorem}\label{fanofree}
For an integer $r \geq 3$, 
if $(E,G)$ be a fano-free rank-$r$ binary representation
with $|E|> \frac{5}{8}2^{r}$, 
then there is a hyperplane $H$ of $G$ such that $(E\cap H,G|H)$ is triangle-free.
Furthermore, $|E\cap H|> \frac{1}{4}2^{r-1}$.
\end{theorem}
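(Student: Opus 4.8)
The plan: first note that the size bound comes for free, then find a triangle-free hyperplane section by Fourier analysis built on Theorem~\ref{tri_counting} and the cones of Lemma~\ref{cone_ind}.

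For any hyperplane $H$ of $G$ one has $|E\cap H| \geq |E| - |G\del H| = |E| - 2^{r-1} > \tfrac58 2^r - \tfrac12 2^r = \tfrac14 2^{r-1}$, so it is enough to produce a single hyperplane $H$ with $(E\cap H, G|H)$ triangle-free; suppose toward a contradiction that none exists, so every hyperplane section of $E$ contains a triangle. Work over $\bV = \GF(2)^r$ as in the proof of Theorem~\ref{tri_counting}, identifying $G$ with $\bV\del\{0\}$ and the hyperplanes of $G$ with the sets $W_\gamma\del\{0\}$, $W_\gamma = \{x : x\cdot\gamma = 0\}$, $\gamma\in\bV\del\{0\}$. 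Expanding $1_{W_\gamma}$ into characters in $T_E = 1_E*1_E*1_E(0)$ gives $T_{E\cap W_\gamma} = \tfrac14 T_E + \tfrac34\sum_{x\in E}(-1)^{x\cdot\gamma}|E_x|$, where $|E_x| = (1_E*1_E)(x)$ is the size of the cone of $E$ at $x$ and $\sum_{x\in E}|E_x| = T_E$; so the goal is to find $\gamma\neq 0$ with $T_{E\cap W_\gamma}=0$, equivalently $\sum_{x\in E}(-1)^{x\cdot\gamma}|E_x| = -\tfrac13 T_E$.

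The engine is a consequence of Theorem~\ref{tri_counting}: if a binary representation is triangle-free and $\epsilon$-uniform, then its density $\alpha$ satisfies $\alpha^3 \leq \epsilon\alpha(1-\alpha)$, hence $\alpha \leq \sqrt{\epsilon}$. By Lemma~\ref{cone_ind}, for every $p\in E$ the cone $(E_p, G)$ is triangle-free and $|E_p| \geq 2|E| - 2^r > \tfrac14 2^r$, so --- taking $\epsilon = \tfrac1{16}$ --- no cone $E_p$ is $\tfrac1{16}$-uniform. Now $E_p$ is invariant under $x\mapsto x+p$ (it is the union of the pairs $\{a, a+p\}$ with $a, a+p\in E$), so any hyperplane $W_\gamma$ with $p\notin W_\gamma$ splits $E_p$ exactly in half; thus the non-uniformity of $E_p$ is witnessed by a hyperplane through $p$, equivalently by non-uniformity of the image $\overline{E_p}$ of $E_p$ in $G\con\{p\}$. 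That image is triangle-free --- a triangle in $\overline{E_p}$ lifts to a fano in $E$, using fano-freeness --- and has density $>\tfrac14$ in rank $r-1$, so it too is non-uniform. I would iterate this cone-and-quotient step, descending in rank while maintaining triangle-freeness, density $>\tfrac14$, and non-uniformity, until the rank is small enough that these properties force an explicit hyperplane disjoint from, or very unbalanced relative to, the current set; pulling that information back up through the quotients, together with Theorem~\ref{binaryBB} (and Theorem~\ref{GS} when the density is large enough) to control the rich hyperplane sections $(E\cap H, G|H)$ encountered along the way, one recovers a hyperplane $W_\gamma$ of $G$ with $E\cap W_\gamma$ triangle-free --- the required contradiction.

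I expect the main obstacle to be the bookkeeping of this descent, and especially the near-extremal configurations: the bound $1 - \tfrac3{2^n} = \tfrac58$ is exactly tight, realized by $M(K_5)$ and by the blow-ups built from it (which have $\chi(M) = 3$), so Theorem~\ref{GS} does not apply just above density $\tfrac58$ and such configurations have to be treated by hand; it would be easy to lose a slightly weaker constant here, and not doing so is the delicate point. A secondary subtlety is that the cone is a quadratic construction, so although each $E_p$ is forced to be non-uniform, this is driven by the triangle-freeness of the cone itself (via the density-$\leq\sqrt{\epsilon}$ bound) and not by any transfer of uniformity from $E$ to $E_p$; in the descent one must also be careful that the ``unbalanced side'' produced at each step is an intersection with a hyperplane (still a flat), not an affine complement.
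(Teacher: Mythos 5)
Your preliminary observations are all correct --- the ``furthermore'' bound, the identity $T_E=\sum_{p\in E}|E_p|$, the fact that a triangle-free $\epsilon$-uniform representation has density at most $\sqrt{\epsilon}$, and hence that each cone $E_p$ (density $>\frac14$ by Lemma~\ref{cone_ind}) fails to be $\frac{1}{16}$-uniform, with the imbalance necessarily witnessed by a hyperplane through $p$. But the proof stops exactly where the work begins: the ``iterate this cone-and-quotient step \dots pulling that information back up through the quotients'' is a strategy you have not executed, and nothing in the proposal shows that the descent terminates in a triangle-free hyperplane section of $E$ rather than merely in an increasingly unbalanced one. Knowing that every cone is non-uniform is far weaker than what is needed, and you never extract any global consequence from it; as written this is a plan, not a proof, and your final paragraph essentially concedes the point.

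The paper needs no descent; it closes the double count $T_E=\sum_{p\in E}|E_p|$ in one pass, using your two main tools in the opposite roles from the ones you assign them. For the lower bound, the counting lemma is applied to $E$ itself, not to the cones: under the contradictory assumption every hyperplane section contains a triangle, so Lemma~\ref{Hsize} (with $n=3$) forces $|E\cap H|>\frac12 2^{r-1}$ for every hyperplane $H$, while fano-freeness and Theorem~\ref{binaryBB} give $|E\cap H|\le\frac34 2^{r-1}$; combined with $\frac58 2^r<|E|\le\frac34 2^r$ this shows $E$ is $\epsilon$-uniform for $\epsilon=\frac{1-\alpha}{3}$, and Theorem~\ref{tri_counting} then yields $T_E\ge\frac13\left(2\alpha^3+2\alpha^2-\alpha\right)2^{2r}>\frac{55}{256}2^{2r}$. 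For the upper bound, Theorem~\ref{GS} is applied to each triangle-free cone: if some $|E_p|>\frac{5}{16}2^r$, there is a hyperplane $H$ through $p$ disjoint from $E_p$, whence $|E\cap H|\le 1+(2^{r-2}-1)=\frac12 2^{r-1}$, contradicting the bound just derived; hence $|E_p|\le\frac{5}{16}2^r$ for every $p$, and together with $|E|\le\frac{21}{32}2^r$ (Theorem~\ref{GS} with $n=3$) one gets $T_E\le\frac{105}{512}2^{2r}<\frac{110}{512}2^{2r}<T_E$, the desired contradiction. Note in particular that your concern about Theorem~\ref{GS} failing just above density $\frac58$ is sidestepped entirely: it is invoked only for the cones at density above $\frac{5}{16}$ and for $E$ at the $n=3$ threshold $\frac{21}{32}$, never for $E$ at density $\frac58$.
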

\begin{proof}
Suppose otherwise that each hyperplane $H$ of $G$
contains a triangle of $G$ in $E$.

Let $T_{E}$ be the number of triples $(x,y,z)$ in $E$
that form a triangle in $G$.
This is six times the number of triangles of $G$ in $E$.
Also note that
each triangle in $E$ gives a pair of points in exactly three cones,
thus
$T_{E}=\sum_{p\in E} |E_p|$.
Specifically, we have a bijection given by mapping a triple $(x,y,z)$ that forms a triangle of $G$ in $E$,
to the point $x$ in $E_z$.

\begin{claim}\label{lower}
$T_E > \frac{55}{256} 2^{2r}$.
\end{claim}
\begin{proof}
Let $\alpha=\frac{|E|}{2^{r}}$, which is the density of $E$ considered as a subset of $\mathbb{V}\cong \GF(2)^r$.
Let $\epsilon=\frac{1-\alpha}{3}$.

We first show that $E$ is $\epsilon$-uniform,
then use Theorem~\ref{tri_counting}.

Fix a hyperplane $H$ of $G$.
By contradictory assumption, $E\cap H$ is $\PG(2,2)$-free yet contains a triangle of $G$.
So by Lemma~\ref{Hsize} and Theorem~\ref{binaryBB}, 
$$\frac{1}{2}2^{r-1}<|E\cap H| \leq \frac{3}{4}2^{r-1}.$$

By assumption and Theorem~\ref{binaryBB},
$$\frac{5}{8}2^{r}<|E| \leq \frac{3}{4}2^{r}.$$

Thus as $\epsilon2^r=\frac{1-\alpha}{3}2^r=\frac{2^r-|E|}{3}$,
$$\frac{1}{2}\left(|E|-\epsilon2^r\right) < \frac{1}{2}2^{r-1}<|E\cap H| \leq \frac{3}{4}2^{r-1} < \frac{1}{2}\left(|E|+\epsilon2^r\right).$$
As this holds for any hyperplane $H$ of $G$,
we have that $E$ is $\epsilon$-uniform.

So by Theorem~\ref{tri_counting},

$$\left|T_E-\alpha^3 2^{2r}\right| \leq  \epsilon \left(\alpha-\alpha^2\right)2^{2r}.$$

Therefore
\begin{equation*}
\begin{split}
T_E &\geq \alpha^3 2^{2r}- \epsilon \left(\alpha-\alpha^2\right)2^{2r}\\
	&=\left(\alpha^3-\frac{1-\alpha}{3}(1-\alpha)\alpha\right)2^{2r}\\
	&=\frac{1}{3}\left(2\alpha^3+2\alpha^2-\alpha\right)2^{2r}.
\end{split}
\end{equation*}
So as $\alpha=\frac{|E|}{2^{r}}>\frac{5}{8}$ by assumption on $(E,G)$,
we have that $T_E > \frac{55}{256} 2^{2r}$, as desired.
\end{proof}

\begin{claim}\label{upper}
For each $p\in E$, we have $|E_p| \leq \frac{5}{16} 2^{r}$.
\end{claim}
\begin{proof}
Suppose otherwise that $|E_p|> \frac{5}{16} 2^{r}$.
By Theorem~\ref{cone_ind}, $(E_p,G)$ is a triangle-free rank-$r$ binary representation,
so by Theorem~\ref{GS}, there is a hyperplane $H$ of $G$ that is disjoint from $E_p$.
Note $p \in H$, as otherwise $E_p$ is empty.
Thus all lines of $G$ in $H$ through $p$ can contain at most one other point of $E$,
and so $|E\cap H| \leq 1+|\PG(r-2,2)|=\frac{1}{2} 2^{r-1}$.
This contradicts Lemma~\ref{Hsize} as $H$ contains a triangle of $G$ in $E$ by contradictory assumption.
\end{proof}

By the Goevaerts-Storme Theorem (Theorem~\ref{GS}), $|E| \leq \frac{21}{32}2^{r}$. Hence by Claim~\ref{upper},
$$T_{E}=\sum_{p\in E} |E_p| \leq \frac{21}{32}2^{r} \cdot \frac{5}{16}2^{r}=\frac{105}{512}2^{2r}.$$

Combining this with Claim~\ref{lower}, we have $\frac{105}{512}2^{2r}\geq T_E > \frac{110}{512}2^{2r}$,
a contradiction.

So there must indeed be a hyperplane $H$ of $G$ such that $(E\cap H,G|H)$ is triangle-free,
as we wanted to show.

Furthermore,
$$|E\cap H|=|E|-|E\setminus H|>|E|-|\AG(r-1,2)|> \frac{1}{4}2^{r-1}.$$
\end{proof}

\section{Main Proof}
We proove Theorem~\ref{struct} by induction. Our inductive step is as follows:
\begin{theorem}\label{structind}
For integers $r$ and $n$ with $r\geq n\geq 3$, 
if $M$ is a simple rank-$r$ $\PG(n-1,2)$-free binary matroid with
$|M|> \left(1-\frac{3}{2^{n}}\right)2^{r}$, 
then there is a $\PG(n-2,2)$-free hyperplane $L$ of $M$.
Furthermore, $|L|> \left(1-\frac{3}{2^{n-1}}\right)2^{r-1}$.
\end{theorem}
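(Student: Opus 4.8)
The plan is to induct on $n$. The base case $n=3$ is exactly Theorem~\ref{fanofree}, read through the dictionary $M=G|E$ (a fano is $\PG(2,2)$, a triangle is $\PG(1,2)$, $1-\frac{3}{2^{3}}=\frac58$, $1-\frac{3}{2^{2}}=\frac14$). Before starting the induction, observe that the ``furthermore'' clause is automatic for \emph{every} hyperplane $H$ of $G$: since $G\del H\cong\AG(r-1,2)$ has $2^{r-1}$ points, $|E\cap H|=|E|-|E\del H|\geq|E|-2^{r-1}>(1-\frac{3}{2^{n}})2^{r}-2^{r-1}=(1-\frac{3}{2^{n-1}})2^{r-1}$. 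So the real task is to exhibit a single hyperplane $H$ with $(E\cap H,\,G|H)$ being $\PG(n-2,2)$-free.

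For the inductive step, fix $n\geq 4$, assume Theorem~\ref{structind} for $n-1$, and represent $M$ by $(E,G)$ with $G\cong\PG(r-1,2)$. If $r=n$ the hypotheses force $|E|=2^{n}-2$, so $E=G\del\{q\}$ for a point $q$, and any hyperplane $H\ni q$ works because $E\cap H=H\del\{q\}$ then contains no rank-$(n-1)$ flat. So assume $r\geq n+1$. Pick any $p\in E$ and form the cone $(E_{p},G)$; by Lemma~\ref{cone_ind} it is $\PG(n-2,2)$-free with $|E_{p}|\geq 2|E|-2^{r}>(1-\frac{3}{2^{n-1}})2^{r}>\frac12 2^{r}$, hence $\rank(E_{p})=r$. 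Now the induction hypothesis applies to $(E_{p},G)$ (since $r\geq n>n-1\geq 3$ and $|E_{p}|>(1-\frac{3}{2^{n-1}})2^{r}$) and produces a hyperplane $H$ of $G$ with $(E_{p}\cap H,\,G|H)$ being $\PG(n-3,2)$-free. The claim is that this same $H$ is $\PG(n-2,2)$-free for $E$.

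This is proved by two instances of a single squeeze between a density lower bound and the Bose--Burton cap (Theorem~\ref{binaryBB}) on the $\PG(n-3,2)$-free set $E_{p}\cap H$, whose size is at most $(1-\frac{2}{2^{n-2}})2^{r-1}$. \emph{First, $p\in H$}: otherwise each of the $\frac12|E_{p}|$ lines through $p$ that lie in $E$ meets $H$ in exactly one non-$p$ point, so $|E_{p}\cap H|=\frac12|E_{p}|>(1-\frac{3}{2^{n-1}})2^{r-1}$, which is too big since $\frac{2}{2^{n-2}}=\frac{4}{2^{n-1}}>\frac{3}{2^{n-1}}$. \emph{Second, $(E\cap H,\,G|H)$ is $\PG(n-2,2)$-free}: otherwise Lemma~\ref{Hsize} (using $|E|>(1-\frac{3}{2^{n}})2^{r}$) gives $|E\cap H|>(1-\frac{2}{2^{n-1}})2^{r-1}$, and applying Lemma~\ref{cone_ind} to $(E\cap H,\,G|H)$ at $p$ (legitimate since its ambient rank $r-1$ is $\geq n$) produces a cone of size at least $2|E\cap H|-2^{r-1}>(1-\frac{2}{2^{n-2}})2^{r-1}$; but because the line through $p$ and any other point of $H$ stays inside $H$, this cone \emph{is} $E_{p}\cap H$, again overshooting the Bose--Burton cap. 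Hence $H$ is a $\PG(n-2,2)$-free hyperplane of $M$, and the size estimate was already recorded.

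The crux --- and the only delicate point --- is the ``Second'' step: converting the hyperplane $H$, produced by induction to be good for the \emph{cone} $E_{p}$, into a hyperplane good for $E$ itself. The bridge is the identity ``$(E\cap H)$'s cone at $p$ inside $H$ equals $E_{p}\cap H$'', which lets Lemma~\ref{Hsize}'s density lower bound (forcing $E_{p}\cap H$ to be too large if $H$ were bad for $E$) collide with the Bose--Burton bound on a $\PG(n-3,2)$-free set. Lining up the three thresholds $1-\frac{3}{2^{n-1}}$, $1-\frac{2}{2^{n-1}}$, $1-\frac{2}{2^{n-2}}$ in the right order is exactly what pins the hypothesis to $|M|>(1-\frac{3}{2^{n}})2^{r}$, and the low-rank bookkeeping (the case $r=n$, checking $\rank(E_{p})=r$, and checking $G|H$ has ambient rank $\geq n$) has to be isolated because Lemma~\ref{cone_ind} needs its ambient rank to be at least $n$.
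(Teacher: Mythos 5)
Your inductive step is correct but genuinely different from the paper's. The paper proves a geometric version (Theorem~\ref{geostructind}) by taking a counterexample with $n$ minimal, applying the statement \emph{recursively} to the cone $(E_p,G)$ all the way down to obtain a \emph{triangle-free corank-$(n-3)$ flat} $K'$ with $p\in K'$, bounding $|E\cap K'|\leq\frac34 2^{r-n+3}$ via Bose--Burton and the cone inequality, and then extending $K'$ to a hyperplane $H$ that is forced to be too sparse, contradicting the lower bound from Lemma~\ref{Hsize} that every hyperplane of a counterexample must satisfy. You instead descend only one level: you apply the induction hypothesis to $G|E_p$ to get a hyperplane $H$ with $E_p\cap H$ being $\PG(n-3,2)$-free, and then show that this very $H$ works for $E$, using the identity $(E\cap H)_p=E_p\cap H$ (valid once $p\in H$, which you correctly force by the $\frac12|E_p|$ computation) to collide Lemma~\ref{Hsize}'s lower bound with the Bose--Burton cap on the $\PG(n-3,2)$-free set $E_p\cap H$. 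The arithmetic in both of your squeezes checks out ($1-\frac{3}{2^{n-1}}>1-\frac{4}{2^{n-1}}$ and $2(1-\frac{2}{2^{n-1}})2^{r-1}-2^{r-1}=(1-\frac{2}{2^{n-2}})2^{r-1}$), and your argument avoids both the full recursive Remark~\ref{geoindstep} and the ``extend $K'$ to a hyperplane'' step, so it is arguably more direct; the paper's version, by producing the triangle-free flat explicitly, feeds more naturally into the proof of Theorem~\ref{struct}.

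One point you gloss over is the passage from ``hyperplane of $G$'' to ``hyperplane of $M$,'' which the paper isolates as Theorem~\ref{geomatspecial}. You must check that $E\cap H$ spans a rank-$(r-1)$ flat of $G$, i.e., is genuinely a hyperplane of the matroid. For $n\geq 4$ your bound $|E\cap H|>(1-\frac{3}{2^{n-1}})2^{r-1}\geq\frac{13}{16}2^{r-1}>|\PG(r-3,2)|$ settles this by size alone, but in the base case $n=3$ the bound $|E\cap H|>\frac14 2^{r-1}=2^{r-3}$ does not exceed $2^{r-2}-1$, so ``read through the dictionary'' is not enough: you need the extra observation that a triangle-free set of rank at most $r-2$ has at most $\frac12 2^{r-2}=\frac14 2^{r-1}$ elements by Bose--Burton, which is exactly the $n=3$ case handled in the paper's Section~\ref{back to matroids}. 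With that sentence added, your proof is complete.
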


We first prove the following geometric analogue of our inductive step (Theorem~\ref{structind}).
\begin{theorem}\label{geostructind}
For integers $r$ and $n$ with $r \geq n \geq 2$, 			
if $(E,G)$ is a $\PG(n-1,2)$-free rank-$r$ binary representation
with $|E|> \left(1-\frac{3}{2^{n}}\right)2^{r}$, 
then there is a hyperplane $H$ of $G$ such that $(E\cap H,G|H)$ is $\PG(n-2,2)$-free.
Furthermore, $|E\cap H|> \left(1-\frac{3}{2^{n-1}}\right)2^{r-1}$.
\end{theorem}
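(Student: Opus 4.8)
The plan is to prove Theorem~\ref{geostructind} by induction on $n$, using the case $n=3$ (Theorem~\ref{fanofree}) as a partial base and treating the case $n=2$ separately. For $n=2$, a $\PG(1,2)$-free (i.e.\ triangle-free) representation with $|E|>\frac{1}{4}2^r$ needs a hyperplane $H$ with $(E\cap H,G|H)$ point-free (i.e.\ $E\cap H=\emptyset$) and $|E\cap H|>\frac14 2^{r-1}$; but these two demands conflict, so in fact the $n=2$ case must be interpreted with the convention that $\PG(0,2)$-free means empty, and one checks directly that a triangle-free representation of this density cannot exist unless $r$ is small — so the real content is $n\geq 3$. For $n\geq 4$ I would argue as follows. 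Pick any point $p\in E$ and form the cone $(E_p,G)$; by Lemma~\ref{cone_ind} it is $\PG(n-2,2)$-free with $|E_p|\geq 2|E|-2^r>\left(1-\frac{6}{2^n}\right)2^r=\left(1-\frac{3}{2^{n-1}}\right)2^r$. This is exactly the density threshold to which I can apply the theorem with $n$ replaced by $n-1$, provided $r\geq n-1$, which holds.

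The key steps, in order: (1) From the inductive hypothesis applied to $(E_p,G)$ at level $n-1$, obtain a hyperplane $H$ of $G$ with $(E_p\cap H, G|H)$ being $\PG(n-3,2)$-free and $|E_p\cap H|>\left(1-\frac{3}{2^{n-2}}\right)2^{r-1}$. (2) Show that this same $H$ witnesses the conclusion for $(E,G)$ at level $n$, i.e.\ that $(E\cap H,G|H)$ is $\PG(n-2,2)$-free. The idea here is the standard cone/lift argument: if $E\cap H$ contained a copy of $\PG(n-2,2)$, call it $S$, then because $E_p\cap H$ is $\PG(n-3,2)$-free, $S$ cannot lie inside the cone structure in a way that forces a large projective geometry through $p$ — more precisely, one relates triangles of $G$ in $E\cap H$ through pairs of points to the cone $E_p$, exactly as in the bijection used in the proof of Theorem~\ref{fanofree}, to derive that a $\PG(n-2,2)$ in $E\cap H$ together with $p$ would produce a $\PG(n-1,2)$ in $E$, contradicting that $(E,G)$ is $\PG(n-1,2)$-free. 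Actually the cleaner route is to apply Lemma~\ref{Hsize}: if $(E\cap H,G|H)$ were not $\PG(n-2,2)$-free then $|E\setminus H|\leq\left(1-\frac{1}{2^{n-1}}\right)2^{r-1}$, while from $|E|>\left(1-\frac{3}{2^n}\right)2^r$ and an upper bound on $|E\cap H|$ coming from $E\cap H$ being constrained, one forces a numerical contradiction. (3) Finally, derive the furthermore clause $|E\cap H|>\left(1-\frac{3}{2^{n-1}}\right)2^{r-1}$ from $|E\cap H|\geq|E_p\cap H|$ together with the bound in step (1), or alternatively from $|E\cap H|=|E|-|E\setminus H|$ and the $\PG(n-2,2)$-freeness of $(E\setminus H)$ interpreted as a representation in the rank-$(r-1)$ affine part, via Theorem~\ref{binaryBB}.

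The main obstacle I expect is step (2): passing from "$(E_p\cap H,G|H)$ is $\PG(n-3,2)$-free" to "$(E\cap H,G|H)$ is $\PG(n-2,2)$-free" is not automatic, because $E_p\cap H$ can be much smaller than $E\cap H$ and a priori $E\cap H$ could contain a $\PG(n-2,2)$ that interacts only trivially with $p$. The resolution must combine the hyperplane choice with a density accounting: one shows that any $\PG(n-2,2)$ inside $E\cap H$ would, by Lemma~\ref{Hsize} applied within $G|H$ (of rank $r-1$, level $n-1$), force $|E\cap H|$ to split so unevenly across hyperplanes of $G|H$ that it contradicts either the lower bound on $|E\cap H|$ just obtained or the $\PG(n-1,2)$-freeness of the whole. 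Getting the inequalities to close — in particular verifying that the threshold $\left(1-\frac{3}{2^n}\right)2^r$ is exactly strong enough and that the induction does not lose a constant at each level — is where the delicate bookkeeping lies, and it is presumably why the theorem is stated with this precise density rather than something cruder.
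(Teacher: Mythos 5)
Your skeleton (induction on $n$ with Theorem~\ref{fanofree} as the $n=3$ base, coning at a point $p\in E$, and closing with a density count via Lemma~\ref{Hsize}) is the paper's, but your step (2) is a genuine gap and neither route you sketch for it closes. The lift argument fails because a copy $S\cong\PG(n-2,2)$ in $E\cap H$ yields $\cl_G(S\cup\{p\})\cong\PG(n-1,2)$ inside $E$ only when $S\subseteq E_p$, i.e.\ when every line from $p$ to a point of $S$ is full; knowing that $E_p\cap H$ is $\PG(n-3,2)$-free says nothing about a $\PG(n-2,2)$ lying in $(E\cap H)\setminus E_p$. The ``numerical contradiction'' route is not set up either: Lemma~\ref{Hsize} only gives a \emph{lower} bound on $|E\cap H|$ when $E\cap H$ contains a $\PG(n-2,2)$, and you never produce the competing upper bound. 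The paper instead argues by contradiction: assuming \emph{every} hyperplane meets $E$ in a $\PG(n-2,2)$, Lemma~\ref{Hsize} forces $|E\cap H|>\left(1-\frac{1}{2^{n-2}}\right)2^{r-1}$ for every $H$; then it descends not one level but all the way, recursively extracting from the cone $(E_p,G)$ a corank-$(n-3)$ flat $K'$ \emph{containing $p$} on which $E_p$ is triangle-free. There Bose--Burton (Theorem~\ref{binaryBB}) gives $|(E\cap K')_p|\leq\frac12 2^{r-n+3}$, the cone inequality of Lemma~\ref{cone_ind} run backwards gives $|E\cap K'|\leq\frac34 2^{r-n+3}$, and extending $K'$ to a hyperplane $H$ gives $|E\cap H|\leq\left(1-\frac{1}{2^{n-2}}\right)2^{r-1}$, the desired contradiction. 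Note two things your one-step version cannot supply: the flat must contain $p$ (the paper can re-choose a flat of $\cl_G(K'\cup\{p\})$ through $p$ precisely because $K'$ has positive corank and because triangle-freeness survives that exchange; a hyperplane leaves no room to re-choose), and triangle-freeness is needed so that Bose--Burton is strong enough after passing back from the cone to $E\cap K'$.

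Two smaller points. Your first suggestion for the ``furthermore'' clause, $|E\cap H|\geq|E_p\cap H|>\left(1-\frac{3}{2^{n-2}}\right)2^{r-1}$, proves a \emph{weaker} inequality than required (since $1-\frac{3}{2^{n-2}}<1-\frac{3}{2^{n-1}}$); the correct derivation is your alternative, $|E\cap H|=|E|-|E\setminus H|>|E|-2^{r-1}=\left(1-\frac{3}{2^{n-1}}\right)2^{r-1}$, which is what the paper does. Finally, your treatment of $n=2$ is mistaken: the furthermore bound there is $\left(1-\frac{3}{2}\right)2^{r-1}<0$, so there is no conflict, and triangle-free representations of density above $\frac14$ certainly exist for all $r$ (affine geometries). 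The honest situation is that the $n=2$ case of the statement is problematic as written (a complete $5$-point cap in $\PG(3,2)$ is triangle-free, has $|E|>\frac14 2^r$, and avoids no hyperplane), but it is vacuous for the argument because the recursion bottoms out at $n=3$ and the main theorem is trivial at $n=2$.
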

\begin{proof}
Consider, for a contradiction, a counterexample $(r,n,(E,G))$ with $n$ minimal.
Thus Theorem~\ref{geostructind} holds for all integers $n'$ with $n> n' \geq 3$,
and by recursive application we have the following.
\begin{remark}\label{geoindstep}								
For integers $n'$ and $r'$ with $r' \geq n' \geq 2$ and $n'<n$, 
if $(E',G')$ is a $\PG(n'-1,2)$-free rank-$r'$ binary representation
with $|E'|> \left(1-\frac{3}{2^{n'}}\right)2^{r'}$, 
then there is a corank-$(n'-2)$ flat $K'$ of $G'$ such that $(E'\cap K',G'|K')$ is triangle-free.
\end{remark}

By Theorem~\ref{fanofree}, we may assume that $n > 3$.

As $(E,G)$ is a counterexample, for each hyperplane $H$ of $G$,
there is a copy of $\PG(n-2,2)$ in $G|(E\cap H)$.
So by Lemma~\ref{Hsize}		
\begin{equation}\label{eq:Hsize}
|E\cap H|>\left(1-\frac{1}{2^{n-2}}\right)2^{r-1},
\end{equation}
for each hyperplane $H$ of $G$.

For any $p \in E$,
consider $(E_p,G)$.
By Lemma~\ref{cone_ind},
$(E_p,G)$ is a $\PG(n-2,2)$-free rank-$r$ binary representation
with $|E_p|> \left(1-\frac{3}{2^{n-1}}\right)2^{r}$.
So by Remark~\ref{geoindstep},  there is a corank-$(n-3)$ flat $K'$ of $G$ such that $(E_p\cap K',G|K')$ is triangle-free.
Note we may assume that $p \in K'$, as otherwise we can take
a different hyperplane of $\cl_G(K'\cup \{p\})$.							
Thus we get that $(E_p\cap K',G|K') = ((E\cap K')_p,G|K')$.
By the Bose-Burton Theorem (Theorem~\ref{binaryBB}), we have
$\left|(E\cap K')_p\right| \leq \frac{1}{2} 2^{r-n+3}$.							
So by Lemma~\ref{cone_ind} we have 
$\left|E\cap K'\right| \leq \frac{3}{4} 2^{r-n+3}$.
As $r_G(K')=r-n+3<r$, we can extend $K'$ to a hyperplane $H$ of $G$.
Note
\begin{equation*}
\begin{split}
|E\cap H| &\leq |H|-|K'|+|E\cap K'| \\
		&\leq 2^{r-1}-2^{r-n+3} +|E\cap K'| \\
		&\leq \left(1-\frac{1}{2^{n-2}}\right)2^{r-1}.
\end{split}
\end{equation*}
This gives a contradiction with the inequality~(\ref{eq:Hsize}).

Thus there is indeed a hyperplane $H$ of $G$ such that $(E\cap H,G|H)$ is $\PG(n-2,2)$-free,
as we wanted to show.

Furthermore,
$$|E\cap H|=|E|-|E\setminus H|>|E|-|\AG(r-1,2)|> \left(1-\frac{3}{2^{n-1}}\right)2^{r-1}.$$
\end{proof}

\subsection{Back to Matroids}\label{back to matroids}

We now need a lemma so that we can get our matroidal inductive step (Theorem~\ref{structind}) from 
our geometric inductive step (Theorem~\ref{geostructind}):
there may be incongruities between a matroid and the ambient space of its geometric representation.
For example, the rank of the matroid may not be the same as the rank of the ambient space.
Additionally, even if the ranks of the matroid and the ambient geometry do agree,
the hyperplanes might not.
That is to say, while any hyperplane of $M=G|E$ spans a hyperplane in $G$,
a hyperplane of $G$ need not intersect $E$ in a hyperplane of $M$.

Lemma~\ref{geomatspecial} shows that in Theorem~\ref{geostructind} we have sufficient conditions for which there are no such incongruities.
As it is easier, we will first prove the following lemma that is already suffiecient for the case $n\geq 4$.

\begin{lemma}\label{geomateasy}
For any integer $r$ with $r\geq 1$,
let $(E,G)$ be a rank-$r$ binary representation 
and let $M=G|E$.
If $|E|\geq \frac{3}{4}2^{r}$, 
then $r(M)=r$ and 
for each hyperplane $H$ of $G$,
the set $E\cap H$ is a hyperplane of $M$.
\end{lemma}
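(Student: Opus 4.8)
The plan is to establish the two conclusions separately, first the rank claim and then the hyperplane claim. For the rank claim, suppose for contradiction that $r(M) < r$. Then $E$ is contained in a proper flat $F$ of $G$, hence $|E| \le |F| \le 2^{r-1}-1 < \frac{3}{4}2^r$, contradicting the hypothesis. So $r(M) = r$, and in particular every flat of $M$ of corank $1$ spans a hyperplane of $G$.

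For the hyperplane claim, fix a hyperplane $H$ of $G$. Since $r(M) = r$, the set $E$ is not contained in $H$, so $E \setminus H \ne \emptyset$ and $r_M(E \cap H) \le r-1$; it therefore suffices to show $r_M(E \cap H) \ge r-1$, i.e. that $E \cap H$ spans a corank-$1$ flat of $G$ (which, being contained in the hyperplane $H$, must be $H$ itself and hence will have the right closure in $M$). First I would bound $|E \setminus H| = |E| - |E \cap H|$ from above: the set $E \setminus H$ lies in the affine part $G \setminus H \cong \AG(r-1,2)$, so $|E \setminus H| \le 2^{r-1}$, giving $|E \cap H| \ge |E| - 2^{r-1} \ge \frac{3}{4}2^r - 2^{r-1} = 2^{r-1} - 2^{r-2}\cdot \text{(something positive)}$; more precisely $|E \cap H| \ge \frac{3}{4}2^r - \frac{1}{2}2^r = \frac{1}{4}2^r = 2^{r-2}$. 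Now suppose $E \cap H$ fails to span $H$ in $G$; then $E \cap H$ is contained in some hyperplane $H'$ of the rank-$(r-1)$ geometry $G|H$, so $|E \cap H| \le |H'| = 2^{r-2} - 1 < 2^{r-2}$, a contradiction. Hence $\cl_G(E \cap H) = H$, so $E \cap H$ has rank $r-1$ in $M$, and since it is a closed set in $M$ (being $E$ intersected with a flat of $G$), it is a hyperplane of $M$.

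The main obstacle — really the only delicate point — is being careful that $E \cap H$ is genuinely a \emph{flat} of $M$ and not merely a rank-$(r-1)$ subset: a point $e \in E \setminus H$ could in principle lie in the $M$-closure of $E \cap H$ if $E \cap H$ spanned all of $G$, but once we know $\cl_G(E\cap H) = H \not\ni$ such points, closure in $M$ is inherited from closure in $G$, so $E \cap H = \cl_G(E\cap H) \cap E = H \cap E$ is closed in $M$. I would also double-check the arithmetic that $\frac{3}{4}2^r - 2^{r-1} = 2^{r-2} > 2^{r-2} - 1 \ge |H'|$, which is exactly tight enough; the strictness in $|H'| = 2^{r-2}-1 < 2^{r-2}$ is what forces the contradiction, so the hypothesis $|E| \ge \frac{3}{4}2^r$ is used in an essential (if barely) way.
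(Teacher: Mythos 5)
Your proof is correct and follows essentially the same route as the paper: bound $|E\setminus H|$ by $|\AG(r-1,2)|=2^{r-1}$ to get $|E\cap H|\geq 2^{r-2}>|\PG(r-3,2)|$, forcing $E\cap H$ to span $H$. You simply make explicit the flatness of $E\cap H$ in $M$, which the paper leaves implicit.
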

\begin{proof}
Note 
$|E|\geq \frac{3}{4}2^{r}>2^{r-1}-1=|\PG(r-2,2)|,$
so $r(M)=r_G(E)=r$.

For any hyperplane $H$ of $G$,
\begin{equation*}
\begin{split}
|E\setminus H|+|E\cap H|&=|E| \\
					&\geq \frac{3}{4}2^{r} \\
					&>2^{r-1}+2^{r-2}-1 \\
					&=|\AG(r-1,2)|+|\PG(r-3,2)|,
\end{split}
\end{equation*}
so $r_M(E\cap H)=r_G(E\cap H)=r-1$.
\end{proof}

To also include the case $n=3$, 
we need more conditions and more work.

\begin{theorem}\label{geomatspecial}
For any integers $r\geq n \geq 3$, 
let $(E,G)$ be a $\PG(n-1,2)$-free rank-$r$ binary representation
and let $M=G|E$ 
with $|E|> \left(1-\frac{3}{2^{n}}\right)2^{r}$.
Then $M$ is a simple $\PG(n-1,2)$-free rank-$r$ binary matroid,
and for each hyperplane $H$ of $G$,
the set $E\cap H$ is a hyperplane of $M$

\end{theorem}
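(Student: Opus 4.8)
The plan is to show that $M$ has full rank $r$ and that every hyperplane of $G$ meets $E$ in a hyperplane of $M$, by reducing to the situation covered by Lemma~\ref{geomateasy} (which already handles the case $|E| \ge \frac34 2^r$) and then treating the remaining range $\left(1-\frac{3}{2^n}\right)2^r < |E| \le \frac34 2^r$ separately. Simplicity of $M$ is immediate since $E$ is a subset of the points of a projective geometry, and $\PG(n-1,2)$-freeness of $M = G|E$ is just the hypothesis that $(E,G)$ is $\PG(n-1,2)$-free. So the content is the rank and hyperplane assertions. Note that $\left(1-\frac{3}{2^n}\right)2^r \ge \left(1-\frac{3}{8}\right)2^r = \frac58 2^r > 2^{r-1}-1 = |\PG(r-2,2)|$ whenever $n \ge 3$, so $r_G(E) = r$, i.e.\ $r(M) = r$; this part is identical to the first line of Lemma~\ref{geomateasy} and needs no new ideas.

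For the hyperplane statement, fix a hyperplane $H$ of $G$. We must show $r_M(E \cap H) = r-1$; equivalently, $E \cap H$ is not contained in any flat of $G$ of rank $\le r-2$, equivalently $E \cap H$ spans $H$ in $G$. Suppose for contradiction that $\cl_G(E \cap H)$ is a flat $F$ of rank $\le r-2$, so $F$ is contained in some hyperplane $H'$ of $G$ with $H' \ne H$ (when $n \ge 4$ the cheap bound $|E \cap H| > |\AG(r-1,2)| + |\PG(r-3,2)|$ from Lemma~\ref{geomateasy} already rules this out, so assume $n = 3$). Then $E \cap H \subseteq H \cap H'$, a corank-$2$ flat of $G$, which has $2^{r-2}-1$ points. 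Hence $|E \cap H| \le 2^{r-2}-1$, so $|E \setminus H| = |E| - |E \cap H| \ge \left(1-\frac{3}{2^n}\right)2^r - (2^{r-2}-1) = \frac58 2^r - \frac14 2^r + 1 = \frac38 2^r + 1 > |\AG(r-1,2)| = 2^{r-1}$. But $E \setminus H$ is contained in the affine space $G \setminus H \cong \AG(r-1,2)$, which has only $2^{r-1}$ points --- a contradiction. Thus $E \cap H$ spans $H$, i.e.\ $r_M(E \cap H) = r-1$, so $E \cap H$ is a hyperplane of $M$.

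The main obstacle I anticipate is making sure the numerical inequalities in the $n=3$ range are genuinely strict and that I am not accidentally relying on a bound that only holds for $n \ge 4$; in particular the split between "$E \cap H$ could lie in a corank-$2$ flat" and "$E \setminus H$ overflows the affine space" has to be organized so that the bound $|E| > \frac58 2^r$ alone (the worst case, $n=3$) suffices. A secondary point to be careful about: one should confirm that "$E \cap H$ is a hyperplane of $M$" is exactly the statement $r_M(E \cap H) = r-1 = r(M)-1$ together with $E \cap H$ being a flat of $M$ --- the latter is automatic since $H$ is a flat of $G$ and flats of $M = G|E$ are precisely intersections of $E$ with flats of $G$. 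Once the rank computation goes through, the flat condition is free, so the whole proof is essentially the two counting inequalities above plus the reduction to Lemma~\ref{geomateasy} for $n \ge 4$.
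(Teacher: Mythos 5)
Your reduction for $n\ge 4$ (via Lemma~\ref{geomateasy}) and your rank computation $r(M)=r$ are fine, but the $n=3$ case contains a fatal arithmetic error, and the error hides a genuine conceptual gap. You claim $\frac{3}{8}2^r+1 > 2^{r-1}$; in fact $\frac{3}{8}2^r + 1 = 3\cdot 2^{r-3}+1 \le 4\cdot 2^{r-3} = 2^{r-1}$ for every $r\ge 3$, so there is no contradiction. More fundamentally, no purely counting argument can work here: a corank-$2$ flat of $G$ has $2^{r-2}-1$ points and $G\setminus H$ has $2^{r-1}$ points, and $2^{r-1}+(2^{r-2}-1)=\frac{3}{4}2^r-1 \ge \frac{5}{8}2^r$ for $r\ge 3$, so there is room for a set of density above $\frac58$ whose intersection with $H$ lies entirely in a corank-$2$ flat. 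Such a set necessarily contains a Fano plane, but you never invoke Fano-freeness in this case, so your argument cannot close.

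The paper's proof uses the freeness hypothesis through a dichotomy. If $G|(E\cap H)$ contains a $\PG(n-2,2)$ (for $n=3$, a triangle), Lemma~\ref{Hsize} gives $|E\cap H| > \left(1-\frac{2}{2^{n-1}}\right)2^{r-1} = 2^{r-2}$, while the Bose--Burton bound applied to the $\PG(2,2)$-free set $E\cap H$ says any such set of rank $\le r-2$ has at most $\frac{3}{4}2^{r-2}$ points; hence $E\cap H$ spans $H$. If instead $E\cap H$ is triangle-free, then $|E\cap H| > |E| - |\AG(r-1,2)| > \frac{1}{8}2^r = 2^{r-3}$, while Bose--Burton for triangle-free sets of rank $\le r-2$ gives at most $\frac{1}{2}2^{r-2}=2^{r-3}$ points; again $E\cap H$ must span $H$. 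You would need to import this dichotomy (or some other use of $\PG(n-1,2)$-freeness) to repair the $n=3$ case.
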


\begin{proof}
As $|E|> \left(1-\frac{3}{2^{n}}\right)2^{r}> \left(1-\frac{2}{2^{n}}\right)2^{r-1}$,
by the Bose-Burton Theorem (Theorem~\ref{binaryBB}), $r(M)=r$.

Fix a hyperplane $H$ of $G$.
If $G|(E\cap H)$ contains
a copy of $\PG(n-2,2)$, 
then by Lemma~\ref{Hsize}, we have
$$|E\cap H|>\left(1-\frac{2}{2^{n-1}}\right)2^{r-1}>\left(1-\frac{2}{2^{n}}\right)2^{r-2}.$$
So by the Bose-Burton Theorem (Theorem~\ref{binaryBB}), $r(E\cap H)=r-1$.
On the other hand, if $(E\cap H,G|H)$ is $\PG(n-2,2)$-free,
as we have
$$|E \cap H|  > |E| -|\AG(r-1,2)|>\left(1-\frac{3}{2^{n-1}}\right)2^{r-1}>\left(1-\frac{2}{2^{n-1}}\right)2^{r-2},$$
 this implies that
 $r(E\cap H)=r-1$, by the Bose-Burton Theorem (Theorem~\ref{binaryBB}).

In any case, $M=G|E$ is a simple $\PG(n-1,2)$-free rank-$r$ binary matroid
and $E\cap H$ is a hyperplane of $M$
\end{proof}

So for integer $r\geq n \geq 3$,
this allows us to replace 
{\it``a rank-$r$ binary representation $(E,G)$ of $M$ 
and $L:=E\cap H$ for a hyperplane $H$ of $G$"}
with {\it``a simple rank-$r$ binary matroid $M$ with hyperplane $L$"} whenever
$(E,G)$ is $\PG(n-1,2)$-free with $|E|> \left(1-\frac{3}{2^{n}}\right)2^{r}$.
In particular we immediately get Theorem~\ref{structind} from Theorem~\ref{geostructind}.
As this is the inductive step and as the base case holds, we have Theorem~\ref{struct} as well.

%

\section*{References}
\newcounter{refs}
\begin{list}{[\arabic{refs}]}
{\usecounter{refs}\setlength{\leftmargin}{10mm}\setlength{\itemsep}{0mm}}


\item\label{b80}
A. Beutelspacher,
Blocking sets and partial spreads in finite projective spaces,
Geometria Dedicata 9 (1980) 425--449.

\item \label{bb}
R. C. Bose, R. C. Burton,
A characterization of flat spaces in a finite geometry
and the uniqueness of the Hamming and the MacDonald codes,
J. Combin. Theory 1 (1966) 96--104.

\item \label{bt}
S. Brandt, S. Thomass\'{e},
Dense triangle-free graphs are four-colourable:
a solution to the Erd\"{o}s-Simonovits problem, submitted 

\item\label{bw}
A. A. Bruen, D. L. Wehlau,
Long binary linear codes and large caps in projective space,
Designs, Codes and Cryptography 17 (1999), 37-60.

\item\label{dt}
A. A. Davydov, L. M. Tombak,
Quasi perfect linear binary codes with distance 4 and complete caps in projective geometry,
Problems of Information Transmission 25 (1990), no. 4, 265--275


\item\label{gn}
J. Geelen, P. Nelson, 
The critical number of dense triangle-free binary matroids, 
arXiv:1406.2588 [math.CO].


\item\label{gl}
W. Goddard, J. Lyle,
Dense graphs with small clique number,
J. Graph Theory 66 (2011), no. 4, 319--331.

\item\label{gs}
P. Govaerts, L. Storme,
The classification of the smallest nontrivial blocking sets in $\PG(n,2)$,
J. Combin. Theory Ser. A 113 (2006) 1543--1548.

\item\label{green}
B. Green,
A Szemer\'{e}di-type regularity lemma in abelian groups, with applications,
Geom. Fuct. Anal. 15 (2005), 340--376.


\item\label{packingproblem}
J. W. P. Hirschfeld, L. Storme,
The packing problem in statistics, coding theory and finite projective spaces: update 2001,
in Finite Geometries, 
Dev. Math. 3, Kluwer Academic Publishers, Dordrecht, 2001, 201--246.

%



\item\label{tidor}
J. Tidor,
Dense binary $\PG(t-1,2)$-free matroids have critical number $t-1$ or $t$,
arXiv:1508.07278 [math.CO].


\end{list}

\end{document}